\newcommand {\R}	{\mathbb{R}}
\newcommand {\N}	{\mathbb{N}}
\newcommand {\C}	{\mathbb{C}}
\DeclareMathOperator{\Id}{Id}
\DeclareMathOperator{\rg}{rg}
\newcommand{\p}{{\raisebox{1.3pt}{{$\scriptscriptstyle\bullet$}}}}
\newcommand{\dX}{{\partial X}}
\newcommand{\sA}{\mathcal{A}}
\newcommand{\sB}{\mathcal{B}}
\newcommand{\sC}{\mathcal{C}}
\newcommand{\sX}{\mathcal{X}}
\newcommand{\sP}{\mathcal{P}}
\newcommand{\sL}{\mathcal{L}}
\newcommand{\sT}{\mathcal{T}}
\newcommand{\rC}{\mathrm{C}}
\newcommand{\rL}{\mathrm{L}}
\newcommand{\COq}{\rC(\overline{\Omega})}
\newcommand{\dO}{{\partial\Omega}}
\newcommand{\CdO}{\rC(\dO)}
\newcommand{\ddn}{\frac{\partial}{\partial n}}
\newcommand{\tddn}{\tfrac{\partial}{\partial n}}
\newcommand{\Gnn}{G_{00}}
\newcommand{\Ann}{A_{00}}
\newcommand{\CnO}{\rC_0(\Omega)}
\newcommand{\Deltam}{\Delta_m}
\newcommand{\eps}{\varepsilon}
\newcommand{\imp}{\,\Rightarrow\,}
\newcommand{\diag}{\operatorname{diag}}
\newcommand{\MnC}{\mathrm{M}_n(\C)}
\renewcommand{\epsilon}{\varepsilon}
\newcommand{\DLB}{\Delta_{\Gamma}}
\theoremstyle{plain}
\newtheorem{thm}{Theorem}[section]
\newaliascnt{cor}{thm}
\newaliascnt{prop}{thm}
\newaliascnt{lem}{thm}
\newtheorem{cor}[cor]{Corollary}
\newtheorem{prop}[prop]{Proposition}
\newtheorem{lem}[lem]{Lemma}
\newcounter{stp}
\newcounter{stpi}
\newcounter{stpii}
\newtheorem{step}[stp]{Step}
\newtheorem{stepi}[stpi]{Step}
\newtheorem{stepii}[stpii]{Step}
\theoremstyle{definition}
\newaliascnt{defn}{thm}
\newaliascnt{asu}{thm}
\newaliascnt{con}{thm}
\theoremstyle{remark}
\newaliascnt{rem}{thm}
\newaliascnt{exa}{thm}
\newaliascnt{masu}{thm}
\newaliascnt{nota}{thm}
\newaliascnt{sett}{thm}
\newtheorem{rem}[rem]{Remark}
\newtheorem{masu}[masu]{Assumptions}
\newtheorem{nota}[nota]{Notation}
\newtheorem{sett}[sett]{Abstract Setting}
\newenvironment{psmallmatrix}
{\left(\begin{smallmatrix}}
{\end{smallmatrix}\right)}
\numberwithin{equation}{section}
\title[Wentzell boundary conditions and the Dirichlet-to-Neumann operator]
{Operators with Wentzell boundary conditions and the Dirichlet-to-Neumann operator}
\author{Tim Binz}
\author{Klaus-Jochen Engel}
\subjclass{47D06, 34G10, 47E05, 47F05}%
\keywords{Wentzell boundary conditions, Dirichlet-to-Neumann operator, analytic semigroup}%
\date{\today}%
\begin{document}
\renewcommand{\sectionautorefname}{Section}
\renewcommand{\subsectionautorefname}{Subsection}

\begin{abstract}
In this paper we relate the generator property of an operator $A$ with (abstract) generalized Wentzell boundary conditions on a Banach space $X$ and its associated (abstract) Dirichlet-to-Neumann operator $N$ acting on a ``boundary'' space $\dX$.
Our approach is based on similarity transformations and perturbation arguments
and allows to split $A$ into an operator $\Ann$ with Dirichlet-type boundary conditions on a space $X_0$ of states having ``zero trace'' and the operator $N$. If $\Ann$ generates an analytic semigroup, we obtain under a weak Hille--Yosida type condition that $A$ generates an analytic semigroup on $X$ if and only if $N$ does so on $\dX$.
Here we assume that the (abstract) ``trace'' operator $L:X\to\dX$ is bounded what is typically satisfied if $X$ is a space of continuous functions. Concrete applications are made to various second order differential operators.

\end{abstract}
\maketitle

\section{Introduction}
\label{sec:intro}

The generation of analytic semigroups by differential operators with generalized Wentzell boundary conditions on spaces of continuous functions attracted the interest of many authors, and we refer, e.g., to \cite{CM:98}, \cite{FGGR:02}, \cite{Eng:03}, 
\cite{EF:05}, \cite{FGGR:10}. For their derivation and physical interpretation we refer to \cite{GGol:06}.
The present paper is a continuation and improvement of \cite{EF:05} where we introduced a general abstract framework to deal with this problem. Before recalling this setting we consider the following typical example in order to explain the basic ideas and the goal of our approach.

\smallskip
Take a smooth bounded domain $\Omega\subset\R^n$. Then consider on $\COq$ the Laplacian $\Deltam$ with ``maximal'' domain $D(\Deltam):=\{f\in \COq:\Deltam f\in\COq\}$, where the derivatives are taken in the distributional sense. Finally, let $\ddn:D(\ddn)\subset\COq\to\CdO$ be the outer normal derivative, $\beta<0$ and $\gamma\in\CdO$. In this setting we define the Laplacian $A\subset\Deltam$ with \emph{generalized Wentzell boundary conditions} by requiring
\begin{equation}\label{eq:bc-W-Lap} 
f\in D(A)
\quad:\iff\quad
\Deltam f\big|_{\partial\Omega}=\beta\cdot\tddn\; f+\gamma\cdot f\big|_{\partial\Omega}.
\end{equation}
Our approach decomposes a function $f\in\COq$ into the (unique) sum $f=f_0+h$ of a function $f_0$ vanishing at the boundary $\dO$ and a harmonic function $h$ having the same trace as $f$. In other words, if $L:\COq\to\CdO$, $Lf:=f|_\dO$ denotes the trace operator, then $f_0\in\ker L=\CnO$ while $h\in\ker(\Deltam)$. Since $h$ is uniquely determined by its trace, it can be identified with its boundary value $x:=Lh$. Hence, every $f\in\COq$ corresponds to a unique pair $\binom{f_0}{x}\in\CnO\times\CdO$.

\smallskip
To formalize this decomposition we introduce an abstract ``\emph{Dirichlet operator}'' $L_0:\CdO\to\COq$. To this end we consider for a given ``boundary function'' $x\in\CdO$ 
the Dirichlet problem
\begin{equation}\label{eq:Dir-Prob}
\begin{cases}
\Deltam f=0,\\
f|_\dO=x.
\end{cases}
\end{equation}
This system admits a unique solution $f\in\COq$, so by setting $L_0x:=f$ we obtain an operator $L_0\in\sL(\CdO,\COq)$. For $f\in\COq$ we then have $f=f_0+h$ where $f_0:=(\Id-L_0L)f$ and $h=L_0x$ for $x:=Lf$. By \eqref{eq:bc-W-Lap} it then follows (for the details see \autoref{ÄT} below in the proof of \autoref{Maintheorem}) that $A$ on $\COq$ transforms into an operator matrix $\sA$ on $\CnO\times\CdO$ of the form
\begin{equation*}
\sA:=
\begin{pmatrix}
\Delta_m&0\\
0&N
\end{pmatrix}+
\sP
\end{equation*}
with some appropriate ``non-diagonal'' domain $D(\sA)\subset\CnO\times\CdO$, see \cite{Eng:98}, \cite{Eng:99}, \cite{Nag:90}. Here $\sP$ denotes an unbounded perturbation while $N:=\beta\cdot\ddn\cdot L_0$ is the so called \emph{Dirichlet-to-Neumann operator} on $\CdO$, see \cite{Esc:94}, \cite[Sect.~12.C]{Tay:96b}. That is, $Nx$ is obtained by applying the Neumann boundary operator to the solution $f$ of the Dirichlet problem \eqref{eq:Dir-Prob}. 

\smallskip
Using perturbation arguments one can show that $\sA$, hence also $A$, generate analytic semigroups if and only if the Dirichlet Laplacian $\Delta_{00}$ on $\CnO$ and the Dirichlet-to-Neumann operator $N$ on $\CdO$ do so. This means that we  decoupled the operator $A\subset\Deltam$ with generalized Wentzell boundary conditions on $X:=\COq$ into an operator $\Ann:=\Delta_{00}$ with Dirichlet boundary conditions on $X_0:=\CnO$ and the Dirichlet-to-Neumann operator $N:=\beta\cdot\ddn\cdot L_0$ on the boundary space $\dX:=\CdO$. 

\smallskip
Since it is well-known that $\Delta_{00}$ generates an analytic semigroup, our main result applied to this example yields that $A$ generates an analytic semigroup on $\COq$ if and only if $N$ generates  an analytic semigroup on $\CdO$. Since the latter is true, see \cite[Sect.~2]{Eng:03}, we conclude that $A\subset\Deltam$ with generalized Wentzell boundary condition \eqref{eq:bc-W-Lap} is the generator of an analytic semigroup. We mention that our approach also keeps track of the angle of analyticity and, in the above example, gives the optimal angle $\frac{\pi}{2}$.

\smallskip
This paper is organized as follows. In \autoref{AS} we introduce our abstract setting and then state in \autoref{MR} our main abstract generation result, \autoref{Maintheorem}. In the following \autoref{Pert} we show that the generator property of operators with generalized Wentzell boundary conditions is invariant under ``small'' perturbations with respect to the action as well as the domain, cf. \autoref{Stoerung} and \autoref{Stoerung2}. For these proofs we study in \autoref{Lemma 2} and  \autoref{Stoerungssatz für D-N} how the Dirichlet- and Dirichlet-to-Neumann operator, respectively, behaves under relatively bounded perturbations.
Finally, in \autoref{Expl} we apply our abstract results to second order differential operators on $\rC([0,1],\C^n)$, the Banach space-valued second-order derivative, a perturbed Laplacian with generalized Wentzell boundary conditions and uniformly elliptic operators on $\rC(\overline{\Omega})$. Our notation follows the monograph \cite{EN:00}.

\section{The Abstract Setting}\label{AS}

As in \cite[Section 2]{EF:05}, the starting point of our investigation is the following

\begin{sett}\label{set:AS}
Consider
\begin{enumerate}[(i)]
\item two Banach spaces $X$ and $\dX$, called \emph{state} and 
\emph{boundary space}, respectively;
\item a densely defined \emph{maximal operator}
$A_m \colon D(A_m) \subset X \rightarrow X$;
\item a \emph{boundary (or trace) operator} $L \in \sL(X,\dX)$;
\item a \emph{feedback operator} $B \colon D(B) \subseteq X \rightarrow \dX$.
\end{enumerate}
\end{sett}

Using these spaces and operators we define the operator $A^B:D(A^B)\subset X\to X$
with abstract \emph{generalized Wentzell boundary conditions} by
\begin{equation}\label{eq:W-BC}
A^B \subseteq A_m, \quad 
D(A^B):= \bigl\{ f \in D(A_m) \cap D(B) : LA_mf = Bf \bigr\} . 
\end{equation}
If $B=0$ the boundary conditions defined by \eqref{eq:W-BC} are called \emph{pure Wentzell boundary conditions}. For an interpretation of Wentzell- as ``dynamic boundary conditions'' we refer to \cite[Sect.~2]{EF:05}.

\smallskip
To fit the example from the introduction into this setting it suffices to choose $X:=\COq$, $\dX:=\CdO$, $A_m:=\Deltam$, $Lf:=f|_{\dO}$ and $B:=\beta\cdot\ddn+\gamma\cdot L$.

\smallskip
In the sequel we need the (in general non-densely defined) operator
$A_0:D(A_0)\subset X\to X$ defined by
\begin{equation*}
A_0\subseteq A_m,
\quad
D(A_0):=D(A_m)\cap\ker(L).
\end{equation*}
In the example from the introduction $A_0$ is the Dirichlet Laplacian $\Delta_0$ on $\COq$ with non-dense domain $D(A_0)=D(\Delta_m)\cap\CnO$.

\begin{masu}
\makeatletter
\hyper@anchor{\@currentHref}%
\makeatother
\label{masu}
\begin{enumerate}[(i)]
\item The operator $A_0$ is a weak Hille--Yosida operator on $X$, i.e. 
there exist $\lambda_0\in \R$ and $M > 0$ such that $[\lambda_0,\infty) \subset \rho(A_0)$ and 
\begin{align*}
\bigl\| \lambda R(\lambda,A_0)\bigr\| \leq M
\quad\text{for all $\lambda \geq \lambda_0$;}
\end{align*}
\item the operator $B$ is relatively $A_0$-bounded with bound $0$, i.e., $D(A_0)\subseteq D(B)$ and for every $\eps>0$ there exists $M_\eps>0$ such that
\begin{equation*}
\|Bf\|_\dX\le \eps\cdot\|A_0f\|_X+M_\eps\cdot\|f\|_X
\quad\text{for all }f\in D(A_0);
\end{equation*}
\item the \emph{abstract Dirichlet operator} $L_0 := (L|_{\ker(A_m)})^{-1} : \dX \rightarrow \ker(A_m)\subseteq X$ exists and is bounded, i.e., for every $x\in\dX$ the abstract Dirichlet problem
\begin{equation*}
\begin{cases}
A_m f = 0, \\
\ Lf = x
\end{cases}
\end{equation*}
admits a unique solution $f\in D(A_m)$ and $L_0 x:=f$ defines an operator $L_0\in\sL(\dX,X)$.
\end{enumerate}
\end{masu}
We note that by \cite[Lem.~1.2]{Gre:87} assumption~(iii) is always satisfied if $A_m$ is closed, $L:X\to\dX$ is surjective and $A_0$ is invertible. Moreover, $L_0L\in\sL(X)$ is a projection onto the subspace $\ker(A_m)$ along $X_0 := \ker(L)$ which induces the decompositions
\begin{equation}\label{eq:split-X}
X=X_0\oplus \ker(A_m)
\qquad\text{and}\qquad
D(A_m)=D(A_0)\oplus\ker(A_m).
\end{equation}

\smallskip
In the sequel we will need  the following operators. 

\begin{nota} Define $G_m:D(G_m)\subset X\to X$ by 
\begin{equation*}
G_mf:=A_mf-L_0B\cdot(\Id-L_0L)f,
\quad
D(G_m):=D(A_m).
\end{equation*}
Then for $*\in\{1,0,00\}$ we consider the restrictions $A_*\subset A_m$ and $G_*\subset G_m$
given by
\begin{alignat*}{3}
&A_{0}:D(A_0)\subset X\to X,&&D(A_{0}) := \{ f \in D(A_m) : Lf = 0 \}, \\
&A_{1}:D(A_1)\subset X\to X,&\quad&D(A_{1}) := \{ f \in D(A_m) : LA_mf = 0 \},\\
&\Ann:D(\Ann)\subset X_0\to X_0,&&D(\Ann) := \{ f \in D(A_m): Lf=0,\;LA_mf = 0 \}\\
\intertext{and}
&G_{0}:D(G_0)\subset X\to X,&&D(G_{0}) := D(A_0),\\
&G_{1}:D(G_1)\subset X\to X,&\quad&D(G_{1}) := \{ f \in D(G_m) : LG_mf = 0 \},\\
&G_{00}:D(G_{00})\subset X_0\to X_0,&&D(G_{00}) := \{ f \in D(G_m): Lf=0,\;LG_mf = 0 \}.
\end{alignat*}
Observe that $G_{00}\subset G_0= A_0 - L_0 B$. In other words, $D_*$ for $D\in\{A,G\}$ and $*\in\{0,1,00\}$ is a restriction of $D_m$. For $*=0$ this restriction corresponds to abstract Dirichlet boundary conditions and for $*=1$ to pure Wentzell boundary conditions on $X$, while $D_{00}$ is the part of $D_0$ as well as of $D_1$ in $X_0$.

\smallskip
Finally, we define the abstract \emph{Dirichlet-to-Neumann operator} $N:D(N)\subset\dX\to\dX$ by
\begin{equation*}
Nx:=BL_0x,\quad
D(N):=\bigl\{x\in\dX:L_0x\in D(B)\bigr\}.
\end{equation*}
This operator plays a crucial role in our approach.
\end{nota}

\section{The Main Result}\label{MR}

The following is our main abstract result. In contrast to \cite[Thm.~3.1]{EF:05} it proves (besides further generalizations) that (a)$\iff$(b) and not only that (b)$\imp$(a) in case $D=A$.

\begin{thm}\label{Maintheorem}
Let $D\in\{A,G\}$. Then the following statements are equivalent
\begin{enumerate}[(a)]
\item $A^B$ given by \eqref{eq:W-BC} generates an analytic semigroup of angle $\alpha > 0$ on $X$.
\item $D_0$ is sectorial of angle $\alpha > 0$ on $X$ and the Dirichlet-to-Neumann 
operator $N$ generates an analytic
semigroup of angle $\alpha > 0$ on $\dX$.
\item $D_1$ and $N$ generate analytic semigroups of angle 
$\alpha > 0$ on $X$ and $\dX$, respectively.
\item $D_{00}$ and $N$ generate analytic semigroups of angle 
$\alpha > 0$ on $X_0$ and $\dX$, respectively.
\end{enumerate} 
\end{thm}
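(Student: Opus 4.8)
The plan is to reduce $A^B$ to an operator matrix on the product space $X_0\times\dX$ via the similarity induced by the decomposition \eqref{eq:split-X}, and then to separate the contributions of $\Dnn$ and $N$ by a bound-zero perturbation argument. By \autoref{masu}(iii) the map
\begin{equation*}
\Phi\colon X\to X_0\times\dX,\qquad \Phi f:=\binom{(\Id-L_0L)f}{Lf},
\end{equation*}
is an isomorphism with inverse $\Phi^{-1}\binom{f_0}{x}=f_0+L_0x$, so $A^B$ and $\sA^B:=\Phi A^B\Phi^{-1}$ share all generation properties together with the angle. First I would compute $\sA^B$. Writing $f=f_0+L_0x$ with $f_0\in D(A_0)$ and using $A_mL_0x=0$ together with the Wentzell condition $LA_mf=Bf$ from \eqref{eq:W-BC}, one finds that $\binom{f_0}{x}\in D(\sA^B)$ forces $x\in D(N)$ and the domain relation $Nx=LG_mf_0$, and that on this coupled domain the action is
\begin{equation*}
\sA^B\binom{f_0}{x}=\binom{(\Id-L_0L)G_mf_0}{Nx}+\binom{0}{Bf_0}.
\end{equation*}
The decisive structural point is that the ``Neumann'' coupling $-L_0Nx$ is absorbed into the domain constraint $Nx=LG_mf_0$, so that the only genuinely off-diagonal \emph{action} term is the single entry $Bf_0$.

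Calling the first summand $\sA_{\mathrm{diag}}$ and the second $\sP$, the next step is to verify that $\sP$ is relatively $\sA_{\mathrm{diag}}$-bounded with bound $0$. This is where \autoref{masu}(ii) enters: from the identity $A_0f_0=(\Id-L_0L)G_mf_0+L_0(Nx+Bf_0)$ and the boundedness of $L_0$ one controls $\|A_0f_0\|$ by $\|\sA_{\mathrm{diag}}\binom{f_0}{x}\|+\|f_0\|$ after absorbing the small multiple of $\|A_0f_0\|$ produced by $\|Bf_0\|$, whence $\|\sP\binom{f_0}{x}\|=\|Bf_0\|$ has arbitrarily small relative bound. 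Invoking the perturbation theorem for generators of analytic semigroups, which preserves the angle under bound-zero relatively bounded perturbations, then shows that $\sA^B$, hence $A^B$, generates an analytic semigroup of angle $\alpha$ if and only if $\sA_{\mathrm{diag}}$ does. Since the action of $\sA_{\mathrm{diag}}$ is diagonal, it remains to decouple the domain: computing $R(\lambda,\sA_{\mathrm{diag}})$ by solving $(\lambda-\sA_{\mathrm{diag}})\binom{f_0}{x}=\binom{u_0}{y}$ reduces, via $(\lambda-N)x=y$ and the constraint $LG_mf_0=Nx$, to inverting $\lambda-N$ on $\dX$ and $\lambda-\Gnn$ on $X_0$ with bounded coupling terms (an operator matrix with diagonal action and coupled domain in the sense of \cite{Eng:98,Eng:99,Nag:90}). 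This gives sectoriality of $\sA_{\mathrm{diag}}$ of angle $\alpha$ if and only if both $\Gnn$ on $X_0$ and $N$ on $\dX$ generate analytic semigroups of angle $\alpha$, establishing (a)$\Leftrightarrow$(d) for $D=G$.

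To complete the theorem I would first transfer the case $D=G$ to $D=A$. Since $G_0=A_0-L_0B$ and $L_0B$ is, by \autoref{masu}(ii) and the boundedness of $L_0$, relatively $A_0$-bounded with bound $0$, the operators $A_0$ and $G_0$ (and likewise $\Ann$ and $\Gnn$, $A_1$ and $G_1$) share their sectoriality and generation properties together with the angle, so statements (b)--(d) coincide for $D=A$ and $D=G$. The remaining equivalences (b)$\Leftrightarrow$(c)$\Leftrightarrow$(d) are then soft. With respect to the topological splitting $X=X_0\oplus\ker(A_m)$ one checks directly that $D_1=\diag(\Dnn,0)$, so $D_1$ generates an analytic semigroup of angle $\alpha$ on $X$ exactly when $\Dnn$ does on $X_0$, giving (c)$\Leftrightarrow$(d). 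Moreover $\Dnn$ is the part of the (in general non-densely defined) operator $D_0$ in $X_0=\overline{D(D_0)}$, so the standard characterization of sectoriality through the part in the closure of the domain, together with the weak Hille--Yosida bound from \autoref{masu}(i), yields that $D_0$ is sectorial of angle $\alpha$ on $X$ iff $\Dnn$ generates on $X_0$, giving (b)$\Leftrightarrow$(d). Chaining these equivalences closes the argument for each $D\in\{A,G\}$.

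I expect the main obstacle to be the analysis of $\sA_{\mathrm{diag}}$ with its coupled domain, i.e.\ showing that the constraint $Nx=LG_mf_0$ (equivalently the suppressed term $-L_0N$) neither destroys the generator property nor changes the angle, and that it decouples cleanly into $\diag(\Gnn,N)$. Making the resolvent reduction rigorous and tracking the sector and the constant $M$ through it while only \autoref{masu}(i) is available in place of a genuine Hille--Yosida estimate is the delicate part; by contrast the relative boundedness of $\sP$ and the equivalences among (b), (c) and (d) are comparatively routine.
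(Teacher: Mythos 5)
Your overall architecture coincides with the paper's: the similarity $T f=\binom{(\Id-L_0L)f}{Lf}$, the removal of the single off-diagonal entry $B$ by a bound-zero relatively bounded perturbation, and the triangular resolvent leading to $\diag(\Gnn,N)$ are exactly Steps 1--4 of the paper's proof of (a)$\Rightarrow$(d), and your identification of the matrix as ``diagonal action plus the domain constraint $Nx=LG_mf_0$'' is just a rewriting of the paper's $\begin{psmallmatrix}G_0&-L_0N\\B&N\end{psmallmatrix}$. That half of your argument is sound (the paper passes from the triangular resolvent to the semigroup via injectivity of the Laplace transform, which also yields density of $D(N)$ for free; at the pure resolvent level you would need to add a word on why $N$ is densely defined, but this is recoverable from $\lambda R(\lambda,\sA_0)\to\Id$ strongly).

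The genuine gap is in the converse direction, where you assert that the triangular resolvent formula ``gives sectoriality of $\sA_{\mathrm{diag}}$ \emph{if and only if} both $\Gnn$ and $N$ generate,'' with the coupling dismissed as ``bounded coupling terms,'' and likewise that (b)$\Leftrightarrow$(d) is ``soft.'' Neither is. To get sectoriality of $\sA_{\mathrm{diag}}$ from (d) you must show that the off-diagonal entry $-R(\lambda,G_0)L_0NR(\lambda,N)$ is $O(1/|\lambda|)$ \emph{uniformly on a sector}; after writing $NR(\lambda,N)=\lambda R(\lambda,N)-\Id$ this hinges on the identity $R(\lambda,G_0)L_0=\lambda^{-1}\bigl(L_0-L^{G_m}_\lambda\bigr)$ and on a bound for the Dirichlet operators $L^{G_m}_\lambda$ that is uniform over the sector --- and, before that, on upgrading the hypothesis ``$\Gnn$ generates on $X_0$'' to ``$G_0$ is sectorial on $X$'' via the decomposition $R(\lambda,G_0)=R(\lambda,\Gnn)(\Id-L_0L)+R(\lambda,G_0)L_0L$, which requires the same uniform control of $L_\lambda^{G_m}$. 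None of this is produced in your sketch; the mere relative bound from \autoref{masu}(ii) and the real-axis weak Hille--Yosida estimate do not by themselves give it. This is precisely the content of \cite[Thm.~3.1]{EF:05} and \cite[Lem.~3.3]{EF:05}, which the paper cites for (b)$\Rightarrow$(a) and (b)$\Leftrightarrow$(c)$\Leftrightarrow$(d) instead of reproving them; your appeal to ``the standard characterization of sectoriality through the part in the closure of the domain'' only gives the easy direction (full operator sectorial $\Rightarrow$ part generates), not the converse. Your soft equivalence (c)$\Leftrightarrow$(d) via $D_1\cong\diag(\Dnn,0)$ and your transfer between $D=A$ and $D=G$ via the bound-zero perturbation $L_0B$ are both fine.
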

\begin{proof}
By \cite[Thm.~3.1]{EF:05} we have that (b)$\imp$(a) for $D_0=A_0$. Since $A_0$ and $G_0$ only differ by a relatively bounded perturbation of bound $0$, \cite[Lem.~III.2.6]{EN:00} implies that assumption~(b) is equivalent for $D=A$ and $D=G$. This shows that  (b)$\imp$(a).
The equivalences (b)$\iff$(c)$\iff$(d) for $D=A$ follow by \cite[Lem.~3.3]{EF:05}. 
Now assume that  $D=G$. Then by \cite[Lem.~III.2.5]{EN:00}  there exists $\lambda\in\rho(G_0)$. Since $L$ is surjective, \cite[Lem.~1.2]{Gre:87} implies that the Dirichlet operator for $G_m-\lambda$ exists. As before, \cite[Lem.~3.3]{EF:05} now applied to $G_0-\lambda$, $G_1-\lambda$ and $G_{00}-\lambda$ gives the equivalence of (b), (c) and (d) for $D=G$.

\smallskip
To complete the proof it suffices to verify that (a)$\imp$(d) for $D_{00}=\Gnn$. We proceed in several steps where we put $\sX_0 := X_0 \times \dX$.

\begin{step}\label{ÄT}
The operator $A^B:D(A^B)\subset X\to X$ is similar to $\sA:D(\sA)\subset\sX_0\to\sX_0$ given by
\begin{align*}
\sA := \begin{pmatrix}
G_0 && -L_0N \\ B && N 
\end{pmatrix},
\quad
D(\sA) :=\Bigl\{ \tbinom fx \in D(A_0) \times D(N) \colon G_0 f -L_0 Nx \in X_0 \Bigr\}.
\end{align*}
\end{step}
\begin{proof}
The operator
\begin{align*}
T : X \rightarrow \sX_0 
,\quad 
Tf :=\tbinom{f - L_0 L f}{Lf}
\end{align*}
is bounded and invertible with bounded inverse
\begin{align*}
T^{-1} : \sX_0 \rightarrow X
,\quad
T^{-1}\tbinom fx= f + L_0 x \ .
\end{align*}
We show that $\sA=T A T^{-1}$. Using that $LL_0=\Id_\dX$, $X_0=\ker(L)$ and $A_mL_0=0$ we have
\begin{align*}
\tbinom fx
\in D(\sA)
&\iff
f \in D(A_0),\; x \in D(N) \text{ and } A_m f - L_0Bf - L_0Nx  \in X_0 \\
&\iff
f \in D(A_0),\; x \in D(N) \text{ and } LA_m f - Bf - Nx  = 0 \\
&\iff
f \in D(A_0),\; x \in D(N) \text{ and } LA_m (f + {L_0 x})
= B(f + L_0 x) \\
&\iff T^{-1} 
\tbinom fx  \in D(A)
\iff 
\tbinom fx \in TD(A).
\end{align*}
Moreover, for $\binom fx \in TD(A)=D(\sA)$ we obtain using that $f+L_0x\in D(A)$
\begin{align*}
T A T^{-1}
\tbinom fx
&=
T A_m (f + L_0 x) \\
&= \begin{pmatrix}
 A_m (f + L_0 x) - L_0 L  A_m (f + L_0 x) \\ L A_m (f + L_0 x)
\end{pmatrix} \\
&= \begin{pmatrix}
A_0 f - L_0 B f - L_0 N x  \\ 
Bf + N x 
\end{pmatrix} \\
&= \begin{pmatrix}
G_0 && -L_0 N \\ B && N 
\end{pmatrix}
\binom fx.
\qedhere
\end{align*}
\end{proof}

\begin{step}
The operator $\sA_0:D(\sA_0)\subset\sX_0\to\sX_0$ given by
\begin{align*}
\sA_0 := \begin{pmatrix}
G_0 && -L_0N \\ 0 && N 
\end{pmatrix},
\quad
D(\sA_0) :=D(\sA)
\end{align*}
generates an analytic semigroup of angle $\alpha>0$ on $\sX_0$.
\end{step}

\begin{proof} By assumption $A$ generates an analytic semigroup of angle $\alpha>0$ on $X$. Hence, by \autoref{ÄT}, $\sA$ generates an analytic semigroup of angle $\alpha>0$ on $\sX_0$.
Since $B$ is relatively $A_0$-bounded with bound zero, a simple computation using the triangle inequality shows that 
$\mathcal{B} := \begin{psmallmatrix}
0 && 0 \\
B && 0 
\end{psmallmatrix}$ with domain $D(\sB) := (D(B)\cap X_0) \times \dX$ is 
relatively $\sA$-bounded with bound zero. Hence, by \cite[Lemma III.2.6]{EN:00} also $\sA_0=\sA-\sB$ generates an analytic semigroup with angle $\alpha > 0$ on $\sX_0$. 
\end{proof}

\begin{step}
There exists $\lambda_0\in\R$ such that $[\lambda_0,+\infty)\subset\rho(G_0)\cap\rho(\Gnn)\cap\rho(N)$ and
\begin{equation}\label{eq:res-sA0}
R(\lambda,\sA_0)=
\begin{pmatrix}
R(\lambda,\Gnn)&-R(\lambda,G_0)L_0N R(\lambda,N)\\0&R(\lambda,N)
\end{pmatrix}
\quad\text{for $\lambda\ge\lambda_0$}.
\end{equation}
\end{step}

\begin{proof} By assumption $A_0$ is a weak Hille--Yosida operator. Since $A_0$ and $G_0=A_0-L_0B$ differ only by a relatively bounded perturbation of bound $0$, by \cite[Lem.~III.2.5]{EN:00} also $G_0$ is a weak Hille--Yosida operator. In particular, there exists $\lambda_0\in\R$ such that $[\lambda_0,+\infty)\subset\rho(G_0)\cap\rho(\sA_0)$. Moreover,  \cite[Prop.~IV.2.17]{EN:00} implies $\rho(G_0)=\rho(\Gnn)$ which shows the first claim.

\smallskip
Next we claim that $\lambda-N$ is injective for $\lambda\ge\lambda_0$. If by contradiction we assume that there exists $0\ne x\in\ker(\lambda-N)$, a simple computation shows that 
\begin{equation*}
0\ne\binom{-R(\lambda,G_0)L_0Nx}{x}\in\ker(\lambda-\sA_0)
\end{equation*}
contradicting the fact $\lambda\in\rho(\sA_0)$.
Let now $R(\lambda,\sA_0)=(R_{ij}(\lambda))_{2\times2}$ and choose some arbitrary $\binom gy\in\sX_0$. Then
we have
\begin{align}\notag
\binom{R_{11}(\lambda)g+R_{12}(\lambda)y}{R_{21}(\lambda)g+R_{22}(\lambda)y}
=\binom fx
&\iff
(\lambda - \sA_0) \tbinom f x
=\tbinom g  y\\
&\iff
\begin{cases}\label{eq:N-sur}
(\lambda - G_0)f + L_0 N x &= g \\
(\lambda - N)x &= y \\
LG_0 f &= Nx.
\end{cases}
\end{align}
For $y = 0$ it follows $(\lambda - N) x = 0$ and hence $x = 0$. This implies  $R_{21}(\lambda) = 0$.  Moreover, by \eqref{eq:N-sur} the operator $\lambda-N$ must be surjective, hence it is invertible with inverse $(\lambda-N)^{-1}=R_{22}(\lambda)\in\sL(\dX)$. Again by \eqref{eq:N-sur} this implies $R_{11}(\lambda)=R(\lambda,\Gnn)$. On the other hand, choosing $g=0$ we obtain $R_{21}(\lambda)=-R(\lambda,G_0)L_0N R(\lambda,N)$ as claimed.
\end{proof}

\begin{step}
$D_{00}$ and $N$ generate analytic semigroups of angle 
$\alpha > 0$ on $X_0$ and $\dX$, respectively.
\end{step}

\begin{proof} Denote by $(\sT_0(t)_{t\ge0}$ the semigroup generated by $\sA_0$. Then by \cite[Thm.~II.1.10]{EN:00} for $\lambda\in\R$ sufficiently large $R(\lambda,\sA_0)$ is given by the Laplace transform $(\sL\sT_0(\p))(\lambda)$ of $(\sT_0(t)_{t\ge0}$.
Since $\sL$ is injective, \eqref{eq:res-sA0} implies that the 
semigroup generated by $\sA_0$ is given by
\begin{align*}
\sT_0(t) = \begin{pmatrix}
T(t) && * \\ 0 && S(t)
\end{pmatrix} \ , 
\end{align*}  
where $(T(t))_{t \geq 0}$ and $(S(t))_{t \geq 0}$ are semigroups on $X_0$ and
$\dX$ generated by $\Gnn$ and $N$, respectively. Since by assumption $(\sT_0(t)_{t\ge0}$ is analytic of angle $\alpha>0$, also the semigroups generated by $\Gnn$ and $N$ are analytic of angle $\alpha$.
\end{proof}
This completes the proof of \autoref{Maintheorem}. 
\end{proof}

Since by \cite[Thm.~II.4.29]{EN:00} an analytic semigroup is compact if and only if its generator has compact resolvent, the following result relates compactness of the semigroups generated by $A$ and $D_{00}$, $N$.

\begin{cor}\label{kompakt}
Let $D\in\{A,G\}$. Then
$A$ has compact resolvent if and only if $D_{0}$ and $N$ have compact resolvents on $X$ and $\dX$, respectively.
\end{cor}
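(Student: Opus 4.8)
The plan is to transport the statement through the similarity and perturbation already set up in the proof of \autoref{Maintheorem}, and then to read compactness off the triangular resolvent formula \eqref{eq:res-sA0}. First I would reduce everything to the block operator $\sA_0$. By \autoref{ÄT} the operator $A=A^B$ is similar, via the bounded invertible operator $T$, to $\sA$ on $\sX_0=X_0\times\dX$; since $R(\lambda,\sA)=T\,R(\lambda,A)\,T^{-1}$ with $T,T^{-1}$ bounded, $A$ has compact resolvent if and only if $\sA$ does. Next, $\sA$ and $\sA_0=\sA-\sB$ differ by the perturbation $\sB$, which is relatively $\sA$-bounded with bound $0$; hence for $\lambda$ large one has $R(\lambda,\sA_0)=R(\lambda,\sA)\,(\Id+\sB R(\lambda,\sA))^{-1}$ with bounded correction factor, and symmetrically $R(\lambda,\sA)$ is $R(\lambda,\sA_0)$ composed with a bounded operator. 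Thus $\sA$ has compact resolvent if and only if $\sA_0$ does, so altogether $A$ has compact resolvent if and only if $\sA_0$ does.

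The core step is to show that $\sA_0$ has compact resolvent if and only if $G_0$ and $N$ do. Since the lower-left corner of \eqref{eq:res-sA0} vanishes, $R(\lambda,\sA_0)$ is compact precisely when each of its three entries $R(\lambda,\Gnn)$, $R(\lambda,N)$ and $-R(\lambda,G_0)L_0N R(\lambda,N)$ is compact: each entry is obtained from $R(\lambda,\sA_0)$ by pre- and post-composition with the bounded inclusions and projections of the product $\sX_0=X_0\times\dX$, and conversely $R(\lambda,\sA_0)$ is the sum of the entries. The direction ``$G_0,N$ compact $\Rightarrow\sA_0$ compact'' is then immediate: $R(\lambda,N)$ is compact; since $D(G_0)\subseteq X_0$ forces the range of $R(\lambda,G_0)$ to lie in $X_0$, the part relation $R(\lambda,\Gnn)=R(\lambda,G_0)|_{X_0}$ from \cite[Prop.~IV.2.17]{EN:00} makes $R(\lambda,\Gnn)$ compact; and the off-diagonal entry is $R(\lambda,G_0)$ (compact) composed with the bounded operator $L_0N R(\lambda,N)$, hence compact.

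The main obstacle is the reverse implication, recovering compactness of the full resolvent $R(\lambda,G_0)$ on $X$ from that of its part $R(\lambda,\Gnn)$ on $X_0$. The key observation is that $\sA_0$ compact gives more than compact diagonal blocks: the off-diagonal entry $-R(\lambda,G_0)L_0N R(\lambda,N)$ is compact too. I would rewrite $N R(\lambda,N)=\lambda R(\lambda,N)-\Id_\dX$, so that this entry equals $R(\lambda,G_0)L_0-\lambda R(\lambda,G_0)L_0R(\lambda,N)$; the second summand contains the compact factor $R(\lambda,N)$, whence $R(\lambda,G_0)L_0$ is itself compact. Using the decomposition $X=X_0\oplus\ker(A_m)$ from \eqref{eq:split-X}, with $L_0L$ the projection onto $\ker(A_m)$ along $X_0$, I would split $R(\lambda,G_0)=R(\lambda,\Gnn)(\Id-L_0L)+\bigl(R(\lambda,G_0)L_0\bigr)L$. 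Both summands are compact (a compact operator composed with a bounded one), so $G_0$ has compact resolvent. This settles the case $D=G$, giving that $A$ has compact resolvent if and only if $G_0$ and $N$ do.

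Finally I would pass to $D=A$. Since $G_0=A_0-L_0B$, with $B$ relatively $A_0$-bounded with bound $0$ by \autoref{masu}(ii) and $L_0$ bounded, the operators $A_0$ and $G_0$ differ by a relatively bounded perturbation of bound $0$; the same Neumann-series argument as in the first paragraph shows that $A_0$ has compact resolvent if and only if $G_0$ does. Hence the condition ``$D_0$ has compact resolvent'' is the same for $D=A$ and $D=G$, and the corollary follows for both choices.
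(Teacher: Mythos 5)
Your proposal is correct and follows essentially the same route as the paper's proof: similarity to $\sA$, removal of the perturbation $\sB$, reading compactness off the triangular formula \eqref{eq:res-sA0} via the identity $NR(\lambda,N)=\lambda R(\lambda,N)-\Id_{\dX}$, and the decomposition $R(\lambda,G_0)=R(\lambda,\Gnn)(\Id-L_0L)+R(\lambda,G_0)L_0L$. Your explicit final reduction from $D=G$ to $D=A$ via the bound-$0$ perturbation $L_0B$ is a detail the paper leaves implicit, but it is the intended argument.
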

\begin{proof}
By \autoref{ÄT}, $A$ has compact resolvent if and only if $\sA$ has.
Since $\sA$ and $\sA_0$ differ only by the relatively bounded perturbation $\mathcal{B} := \begin{psmallmatrix}
0 && 0 \\
B && 0 
\end{psmallmatrix}$ of bound $0$, by \cite[III-(2.5)]{EN:00} one of the operators $\sA,\sA_0$  has compact resolvent if and only if the other has. Let $\lambda\in\rho(\sA_0)$. Then by \eqref{eq:res-sA0} $R(\lambda,\sA_0)$ is compact if and only if $R(\lambda,\Gnn)$, $R(\lambda,N)$ and
\begin{equation*}
-R(\lambda,G_0)L_0NR(\lambda,N)=
R(\lambda,G_0)L_0-\lambda R(\lambda,G_0)L_0R(\lambda,N)
\end{equation*}
are all compact. The latter is the case if and only if $R(\lambda,G_0)L_0$ is compact. Now writing
\begin{align*}
R(\lambda,G_0)=R(\lambda,\Gnn)\cdot(\Id-L_0L)+R(\lambda,G_0)L_0\cdot L
\end{align*}
we conclude that $R(\lambda,\sA_0)$ is compact if and only if $R(\lambda,G_0)$ and $R(\lambda,N)$ are compact.
\end{proof}

\section{Perturbations of Operators with Generalized Wentzell Boundary Conditions}\label{Pert}

In many applications the feedback operator $B:D(B)\subset X\to\dX$ which determines the boundary condition in \eqref{eq:W-BC} splits into a sum
\begin{equation}\label{split-B}
B=B_0+CL, \quad D(B) = D(B_0) \cap D(CL) 
\end{equation}
for some $C:D(C)\subset \dX \rightarrow \dX$. For example in \eqref{eq:bc-W-Lap} we could choose $B_0= \beta
\frac{\partial}{\partial n}$ (which determines the feedback from the interior of $\Omega$ to the boundary $\dO$) and the multiplication operator $C=M_\gamma \in \mathcal{L}(\dX)$ (which governs the ``free'' evolution on $\dO$). Next we study this situation in more detail where we allow $C$ to be unbounded. For a concrete example see \cite[(1.2), (3.3)]{FGGR:10} and \autoref{LgW}. Moreover, we will introduce a relatively bounded perturbation $P$ of the operator $A_m$.

\smallskip
To this end we first have to generalize our notation concerning the Dirichlet- and Dirichlet-to-Neumann operators.
For a closed operator $D_m:D(D_m)\subset X\to X$ let $D_0\subset D_m$ with domain $D(D_0):=D(D_m)\cap\ker(L)$ on $X$.
Then  by \cite[Lem.~1.2]{Gre:87} for $\lambda \in \rho(D_0)$ the restriction $L|_{\ker(\lambda - D_m)}:\ker(\lambda-D_m)\to\dX$ is invertible with bounded inverse
\[
L^{D_m}_\lambda := \bigl(L|_{\ker(\lambda - D_m)}\bigr)^{-1} \colon \dX
\rightarrow \ker(\lambda - D_m) \subseteq X,
\]
which we call the abstract Dirichlet operator associated to $\lambda$ and $D_m$. Note that $L^{D_m}_\lambda=L_0^{D_m-\lambda}$, that is
$L^{D_m}_\lambda x = f$ gives the unique solution of the abstract Dirichlet problem
\begin{equation*}
\begin{cases}
D_m f = \lambda f, \\
\ Lf = x .
\end{cases}
\end{equation*}
If $D_m=A_m$ we will simply write $L_\lambda:=L_\lambda^{A_m}$.

\smallbreak
Next, for a relatively $D_0$-bounded feedback operator $F:D(F)\subset X\to\dX$
we introduce the associated generalized abstract Dirichlet-to-Neumann operator $N^{D_m, F}_\lambda:D(N^{D_m, F}_\lambda)\subset\dX\to\dX$ defined by 
\begin{equation*}
N^{D_m, F}_\lambda x:=FL^{D_m}_\lambda x,
\quad
D\bigl(N^{D_m, F}_\lambda\bigr) := \bigl\{x \in \dX : L^{D_m}_\lambda x \in D(F) \bigr\}.
\end{equation*}
If $\lambda = 0$ we simply write $N^{D_m, F}:=N^{D_m, F}_0$. If in addition $F=B$ we put $N^{D_m}:=N^{D_m, B}_0$ and $N^F:=N^{A_m, F}_0$ in case $D_m=A_m$. Finally, as before we set $N:=N^{A_m, B}_0$.

\smallbreak
To proceed we need the following domain inclusions where $B,B_0:D(B)\subset X\to\dX$ are relatively $A_0$-bounded and $C:D(C)\subset \dX \rightarrow \dX$.

\begin{lem}\label{lem:inc-dom}
The following assertions hold true.
\begin{enumerate}[(i)]
\item If $C$ is relatively $N^{B_0}$-bounded, then $D(B_0) \subseteq D(CL)$. 
\item If $N^{B_0}$ is relatively $C$-bounded, then $D(A_m) \cap D(CL) \subseteq D(B_0)$. 
\end{enumerate}
\end{lem}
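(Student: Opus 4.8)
The plan is to reduce both inclusions to a single bookkeeping fact about the decomposition $X = X_0 \oplus \ker(A_m)$ from \eqref{eq:split-X}. Recall that $P := L_0 L$ is the bounded projection onto $\ker(A_m)$ along $X_0 = \ker(L)$, so each $f \in X$ splits as $f = (\Id - P)f + Pf$ with $(\Id - P)f \in \ker(L)$ and $Pf = L_0 Lf \in \ker(A_m)$. The link between the three domains in play is the elementary equivalence
\[
Lf \in D\bigl(N^{B_0}\bigr) \iff L_0 Lf \in D(B_0),
\]
which is immediate from $D(N^{B_0}) = \{x \in \dX : L_0 x \in D(B_0)\}$ together with $L_0 Lf = L_0(Lf)$. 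First I would record the two routine facts that $B_0$ relatively $A_0$-bounded gives $D(A_0) = D(A_m) \cap \ker(L) \subseteq D(B_0)$, and that $f \in D(CL)$ means exactly $Lf \in D(C)$.

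For (ii) I would take $f \in D(A_m) \cap D(CL)$ and split it as above. Because $f \in D(A_m)$, the component $(\Id - P)f$ lies in $D(A_m) \cap \ker(L) = D(A_0) \subseteq D(B_0)$. For the harmonic component, $f \in D(CL)$ yields $Lf \in D(C)$, and relative $C$-boundedness of $N^{B_0}$ gives $D(C) \subseteq D(N^{B_0})$, hence $Lf \in D(N^{B_0})$; by the equivalence above this forces $Pf = L_0 Lf \in D(B_0)$. Adding the two components shows $f = (\Id - P)f + Pf \in D(B_0)$, which is the claim.

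For (i) the argument runs dually: given $f \in D(B_0)$ I must produce $Lf \in D(C)$, and since $C$ is relatively $N^{B_0}$-bounded we have $D(N^{B_0}) \subseteq D(C)$, so it suffices to show $Lf \in D(N^{B_0})$, i.e. $Pf = L_0 Lf \in D(B_0)$. This last step is where I expect the real work to sit: in contrast to (ii), $f$ is now only known to lie in $D(B_0)$, so I cannot read off from a membership $f \in D(A_m)$ that the complementary piece $(\Id - P)f$ belongs to $D(A_0) \subseteq D(B_0)$. The hard part is therefore to establish that $D(B_0)$ is stable under the projection $P$ — equivalently that $(\Id - P)\,D(B_0) \subseteq D(B_0)$ — within the present framework; once this compatibility of $D(B_0)$ with the splitting \eqref{eq:split-X} is in hand, $Pf = f - (\Id - P)f \in D(B_0)$ follows at once and the inclusion $D(B_0) \subseteq D(CL)$ is proved.
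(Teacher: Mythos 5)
Your part (ii) is correct and is essentially the paper's own argument: for $f\in D(A_m)\cap D(CL)$ you split $f$ along \eqref{eq:split-X}, observe that $(\Id-L_0L)f\in D(A_m)\cap\ker(L)=D(A_0)\subseteq D(B_0)$ by relative $A_0$-boundedness of $B_0$, and that $L_0Lf\in L_0\,D(C)\subseteq L_0\,D(N^{B_0})\subseteq D(B_0)$, then add the two pieces. (Minor point: you denote the projection $L_0L$ by $P$, which clashes with the perturbation $P$ of \autoref{Stoerung}.)

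Part (i), however, is not finished. You correctly reduce the claim to showing $L_0L\,D(B_0)\subseteq D(B_0)$ (equivalently $LD(B_0)\subseteq D(N^{B_0})$), but you then stop and declare this stability of $D(B_0)$ under the projection to be ``the hard part''; a proof that ends by naming the remaining obstacle has a genuine gap. For comparison, the paper's proof of (i) is the chain
\begin{align*}
LD(B_0)&=L\bigl((X_0\oplus\ker(A_m))\cap D(B_0)\bigr)=L\bigl(\ker(A_m)\cap D(B_0)\bigr)\\
&=L_0^{-1}\bigl(\ker(A_m)\cap D(B_0)\bigr)\subseteq D\bigl(N^{B_0}\bigr)\subseteq D(C),
\end{align*}
after which $f\in D(B_0)$ gives $Lf\in D(C)$, i.e.\ $f\in D(CL)$. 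Note that the second equality here (more precisely its inclusion ``$\subseteq$'') is, since $L$ is injective on $\ker(A_m)$ with inverse $L_0$, exactly the assertion $L_0Lf\in D(B_0)$ for $f\in D(B_0)$ that you isolate; the paper reads it off directly from the decomposition \eqref{eq:split-X}, i.e.\ it treats the compatibility of $D(B_0)$ with that splitting as immediate rather than as something requiring a separate argument. So you have put your finger on precisely the step that carries the content of (i) --- and you are right that relative $A_0$-boundedness of $B_0$ only controls $B_0$ on $D(A_0)\subseteq X_0$ and does not by itself address the $\ker(A_m)$-component --- but to turn your text into a proof you must either justify $L_0L\,D(B_0)\subseteq D(B_0)$ or, as the paper does, derive the inclusion from the displayed chain of image sets. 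In the paper's applications of \autoref{Stoerung} the operator $C$ is bounded, so $D(CL)=X$ and assertion (i) is trivially satisfied there, which may explain why this point receives no further discussion.
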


\begin{proof}
(i). Recall that $L_0:\dX\to\ker(A_m)$ is bijective with inverse $L$. Hence, using the first decomposition in \eqref{eq:split-X} we conclude
\begin{align*}
LD(B_0)
&=L\,\bigl((X_0\oplus\ker(A_m))\cap D(B_0)\bigr)\\
&=L\bigl(\ker(A_m)\cap D(B_0)\bigr)\\
&=L_0^{-1}\bigl(\ker(A_m)\cap D(B_0)\bigr)\\
&\subseteq D(N^{B_0})
\subseteq D(C).
\end{align*}
This implies the claim.

\smallskip
(ii). By assumption, we have
\begin{equation*}
L\,D(CL)\subseteq D(C)\subseteq D\bigl(N^{B_0}\bigr).
\end{equation*}
This implies 
\begin{equation*}
L_0L\,D(CL)\subseteq L_0D\bigl(N^{B_0}\bigr)\subseteq D(B_0).
\end{equation*}
On the other hand, $(\Id-L_0L)D(A_m)=D(A_0)\subseteq D(B_0)$. Summing up this gives the desired inclusion.
\end{proof}

Note that in part~(ii) of the previous result we cannot expect the inclusion $D(CL)\subset D(B_0)$ since always $X_0=\ker(L)\subset D(CL)$ holds.

\smallbreak
We now return to the decomposition $B=B_0+CL$ from \eqref{split-B} and consider for a relatively $A_m$-bounded perturbation $P:D(P)\subset X\to X$ the operator $(A+P)^B:D((A+B)^P)\subseteq X\to X$ given by
\begin{equation}\label{eq:A+P^B}
\begin{aligned}
(A+P)^B&\,\subseteq A_m+P, \\ 
D\bigl((A+P)^B\bigr)&:= \bigl\{ f \in D(A_m) \cap D(B_0)\cap D(CL) : LA_mf+Pf = B_0f +CL f\bigr\} . 
\end{aligned}
\end{equation}

Next we assume that $C$ is relatively $N^{B_0}=B_0L_0^{A_m}$-bounded of bound $0$. Note that by the previous lemma part~(i) this implies that $D(B)=D(B_0)\cap D(CL)=D(B_0)$. 

\begin{thm}\label{Stoerung}
Let $P \colon D(P) \subset X \rightarrow X$ be relatively $A_m$-bounded with
$A_0$-bound $0$ and let
$C \colon D(C) \subset \dX \rightarrow \dX$ be relatively $N^{B_0}$-bounded of bound $0$. 
Then for $B$ given by \eqref{split-B} the following statements are equivalent.
\begin{enumerate}[(a)]
\item $(A+P)^{B}$ in \eqref{eq:A+P^B} generates an analytic semigroup of angle $\alpha > 0$ on $X$.
\item $A^{B_0}$ generates an analytic semigroup of angle $\alpha > 0$ on $X$.
\item $A_0$ is sectorial of angle $\alpha > 0$ on $X$
and $N^{B_0}$ generates an analytic semigroup of angle $\alpha > 0$ on $\dX$.
\end{enumerate}
\end{thm}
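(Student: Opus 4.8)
The plan is to reduce the statement to \autoref{Maintheorem} by means of two successive perturbation arguments, treating the interior perturbation $P$ and the boundary perturbation $CL$ separately. The key observation is that all three operators $P$, $B_0$, and $CL$ are perturbations of bound $0$ relative to the unperturbed data, so each can be absorbed without changing the generation property or the angle of analyticity. First I would establish the equivalence (b)$\iff$(c) directly: this is just \autoref{Maintheorem} (the equivalence of its (a) and (b)) applied to the feedback operator $B_0$ in place of $B$, noting that the associated Dirichlet-to-Neumann operator is exactly $N^{B_0}=B_0L_0^{A_m}$ in the notation just introduced. Since $B_0$ is relatively $A_0$-bounded (a standing hypothesis of the surrounding discussion), \autoref{Maintheorem} applies verbatim with $D=A$, giving (b)$\iff$(c) at once.

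The substance of the proof is therefore the equivalence (a)$\iff$(b), which I would handle by showing that passing from $A^{B_0}$ to $(A+P)^{B}$ amounts to a relatively bounded perturbation of bound $0$ on the level of the transformed operator $\sA$. Here I would invoke \autoref{ÄT}: under the similarity transform $T$ the operator $A^{B_0}$ becomes the matrix $\sA^{B_0}=\begin{psmallmatrix} G_0 & -L_0 N^{B_0}\\ B_0 & N^{B_0}\end{psmallmatrix}$ on $\sX_0=X_0\times\dX$, and I would show that $(A+P)^{B}$ transforms into a matrix differing from this one only by perturbations built from $P$ and $C$. The perturbation coming from $C$ feeds into the lower-right corner as $CL_0^{A_m}$, which is relatively $N^{B_0}$-bounded of bound $0$ by hypothesis; the perturbation coming from $P$ feeds into the first row (and, after conjugation, off-diagonal terms) and is relatively $A_0$-bounded of bound $0$ by hypothesis. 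The main technical work is to verify, using the triangle-inequality estimates exactly as in the proof of Step~2 of \autoref{Maintheorem}, that the combined matrix perturbation is relatively $\sA^{B_0}$-bounded with bound $0$; then \cite[Lem.~III.2.6]{EN:00} yields that $(A+P)^B$ generates an analytic semigroup of the same angle $\alpha$ if and only if $A^{B_0}$ does.

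Concretely, the steps in order would be: (1) apply \autoref{Maintheorem} with $B_0$ to obtain (b)$\iff$(c); (2) use \autoref{lem:inc-dom}(i) to confirm $D(B)=D(B_0)$, so that the domain of $(A+P)^B$ in \eqref{eq:A+P^B} is well-posed and compatible with the transform $T$; (3) compute $T(A+P)^B T^{-1}$ as in \autoref{ÄT}, identifying it as $\sA^{B_0}$ plus an explicit perturbation matrix involving $P$ and $C L_0^{A_m}$; (4) estimate each entry of this perturbation matrix to show relative $\sA^{B_0}$-boundedness of bound $0$; (5) conclude via \cite[Lem.~III.2.6]{EN:00} that (a)$\iff$(b).

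The hard part will be step~(4), the bookkeeping of the perturbation matrix. The interior perturbation $P$ does not respect the splitting $X=X_0\oplus\ker(A_m)$, so after conjugation by $T$ it produces genuinely off-diagonal contributions of the form $LP(\Id-L_0L)$ and $LPL_0$, and one must check that these remain relatively bounded of bound $0$ with respect to the diagonal of $\sA^{B_0}$ rather than only with respect to $A_0$. The cleanest route is to establish that $P$ relative-$A_m$-bounded with $A_0$-bound $0$ transfers to a relative bound $0$ with respect to $G_0=A_0-L_0 B_0$ (since $G_0$ and $A_0$ differ by a bound-$0$ perturbation, as used in Step~3 of \autoref{Maintheorem}), and then to read off the needed estimates on $\sX_0$ from the bounded operators $L$, $L_0$, and $\Id-L_0L$. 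Once this estimate is in place, the remaining assertions follow mechanically from the cited perturbation lemmas.
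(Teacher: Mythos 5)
Your reduction of (b)$\iff$(c) to \autoref{Maintheorem} applied with $B_0$ in place of $B$ is exactly what the paper does. The gap lies in your route from (a) to (b). You propose to conjugate $(A+P)^{B}$ by the transformation $T$ from Step~1 of the proof of \autoref{Maintheorem} and to treat the result as $\sA^{B_0}$ plus a relatively bounded perturbation of bound $0$, concluding with \cite[Lem.~III.2.6]{EN:00}. That lemma, however, compares a generator $\sA^{B_0}$ with the operator $\sA^{B_0}+\sP$ \emph{on the fixed domain $D(\sA^{B_0})$}, and this is not what the conjugation produces. Carrying out the computation of Step~1 for $A_m+P$ and $B=B_0+CL$ (using $A_mL_0=0$, hence $(A_m+P)L_0=PL_0\ne 0$), the first component of $T(A+P)^{B}T^{-1}\binom fx$ is $G_0f-L_0(N^{B_0}+C)x+P(f+L_0x)$, so the non-diagonal domain constraint becomes $G_0f-L_0N^{B_0}x-L_0Cx+P(f+L_0x)\in X_0$, which is \emph{not} the condition $G_0f-L_0N^{B_0}x\in X_0$ defining $D(\sA^{B_0})$. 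Because the domain is coupled to the action, any perturbation of the action (already for $P=0$, through the term $-L_0Cx$) shifts the domain as well; the two matrices are therefore not additive perturbations of one another on a common domain, and \cite[Lem.~III.2.6]{EN:00} does not apply. No amount of entrywise estimation in your step~(4) repairs this: it is a structural obstruction, not a quantitative one.

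The paper avoids perturbing the coupled matrix altogether. The boundary perturbation $C$ is pushed down to $\dX$, where $N^{B}=N^{B_0}+C$ with $D(N^{B})=D(N^{B_0})$ (by \autoref{lem:inc-dom}.(i)) is an honest bound-$0$ perturbation on a fixed domain, so \cite[Thm.~III.2.10]{EN:00} applies and \autoref{Maintheorem} yields the equivalence of generation for $A^{B}$ and $A^{B_0}$ (\autoref{C bounded}). The interior perturbation $P$ is handled by re-running the whole abstract setting for the perturbed maximal operator $A_m+P-\lambda$: \autoref{Lemma 2} constructs the perturbed Dirichlet operator $L^{A_m+P}_\lambda$, \autoref{Stoerungssatz für D-N} shows that the associated Dirichlet-to-Neumann operator has the same domain as $N^{A_m}_\lambda$ and differs from it only by a bounded operator, and \autoref{Maintheorem} is then applied to both settings and their respective conditions (b) are compared (\autoref{Stoerung3}). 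If you insist on a similarity-transformation picture for the perturbed operator, you must build the transform from the \emph{perturbed} Dirichlet operator $L^{A_m+P}_\lambda$; with the unperturbed $T$ the argument breaks at the domain.
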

 
Before giving the proof we state an analogous result where we interchange the roles of $N^{B_0}$ and $C$. That is, we assume that $N^{B_0}$ is relatively $C$-bounded of bound $0$. Note that by \autoref{lem:inc-dom}.(ii) this implies that $D(A_m)\cap D(B)= D(A_m)\cap D(B_0)\cap D(CL)=D(A_m)\cap D(CL)$.

\begin{thm}\label{Stoerung2}
Let $P \colon D(P) \subset X \rightarrow X$ be relatively $A_m$-bounded with
$A_0$-bound $0$ and let $N^{B_0}$ be relatively $C$-bounded of bound $0$ for some $C \colon D(C) \subset \dX \rightarrow \dX$.  
Then for $B$ given by \eqref{split-B} the following statements are equivalent.
\begin{enumerate}[(a)]
\item $(A+P)^{B}$ in \eqref{eq:A+P^B} generates an analytic semigroup of angle $\alpha > 0$ on $X$.
\item $A^{CL}$ generates an analytic semigroup of angle $\alpha > 0$ on $X$.
\item $A_0$ is sectorial of angle $\alpha > 0$ on $X$
and $C$ generates an analytic semigroup of angle $\alpha > 0$ on $\dX$.
\end{enumerate}
\end{thm}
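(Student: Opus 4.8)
The plan is to make condition~(c) the hub and to reach it from both (a) and (b) by a single application of \autoref{Maintheorem} each. The decisive simplification is that the Dirichlet-to-Neumann operator attached to a feedback of the form $CL$ is $C$ itself: since $LL_0=\Id_\dX$ one has $N^{CL}=(CL)L_0=C$, with $D(N^{CL})=D(C)$. Moreover $CL$ is trivially relatively $A_0$-bounded of bound $0$, because $CLf=0$ for every $f\in D(A_0)\subseteq\ker(L)$; hence the triple $(A_m,L,CL)$ satisfies \autoref{masu}, and the equivalence (a)$\iff$(b) of \autoref{Maintheorem} applied to it (with $D_0=A_0$ and $N=N^{CL}=C$) is exactly the equivalence (b)$\iff$(c) of the present statement.

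It remains to prove (a)$\iff$(c). I would apply \autoref{Maintheorem} to the triple $(A_m+P,\,L,\,B)$. First one checks that this triple again satisfies \autoref{masu}: $A_0+P$ is a weak Hille--Yosida operator by \cite[Lem.~III.2.5]{EN:00} since $P$ has $A_0$-bound $0$; the graph norms of $A_0$ and $A_0+P$ are equivalent, so $B$ is relatively $(A_0+P)$-bounded of bound $0$; and $L$ being surjective with $\rho(A_0+P)\neq\emptyset$ yields the Dirichlet operator $L_0^{A_m+P}$ via \cite[Lem.~1.2]{Gre:87} (after a harmless shift by some $\lambda\in\rho(A_0+P)$, which leaves all generation statements unchanged). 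The equivalence (a)$\iff$(b) of \autoref{Maintheorem} for this triple then identifies (a) with: $A_0+P$ is sectorial of angle $\alpha$ on $X$ and $N^{A_m+P,B}$ generates an analytic semigroup of angle $\alpha$ on $\dX$. I would match these two conditions to (c) by bound-$0$ perturbations. On the interior, $A_0+P$ is sectorial of angle $\alpha$ iff $A_0$ is, by \cite[Lem.~III.2.6]{EN:00}. On the boundary, I chain two steps: the perturbation result for the Dirichlet-to-Neumann operator, \autoref{Stoerungssatz für D-N}, shows that $N^{A_m+P,B}$ and $N:=N^{A_m,B}$ differ by a perturbation of bound $0$, so one generates an analytic semigroup of angle $\alpha$ iff the other does; and, writing $N=BL_0=(B_0+CL)L_0=N^{B_0}+C$, the hypothesis that $N^{B_0}$ is relatively $C$-bounded of bound $0$ gives $D(N)=D(C)$ and, via \cite[Lem.~III.2.6]{EN:00}, that $N$ generates iff $C$ does. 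Concatenating, $N^{A_m+P,B}$ generates iff $C$ does, and together with the interior step we obtain (a)$\iff$(c).

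The main obstacle is the $P$-step on the boundary, i.e. controlling $N^{A_m+P,B}-N$, which is exactly the content of \autoref{Stoerungssatz für D-N}. Because $B$ is unbounded, one cannot argue by operator-norm smallness of $L_0^{A_m+P}-L_0$; instead one solves the perturbed Dirichlet problem to get $d:=L_0^{A_m+P}x-L_0x\in D(A_0)$ with $A_0d=-P\,L_0^{A_m+P}x$, and then composes with $B$, using that relative $A_0$-boundedness of $B$ makes $BA_0^{-1}$ bounded, to see that the correction has small relative bound. Turning this into a genuinely bound-$0$ perturbation and, in particular, checking that all three perturbations preserve the \emph{angle} $\alpha$ is the technical heart of the argument. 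Finally, I would record that under the present hypothesis \autoref{lem:inc-dom}(ii) gives $D(A_m)\cap D(B)=D(A_m)\cap D(CL)$, which is what makes the domain of $(A+P)^{B}$ in \eqref{eq:A+P^B} well posed and consistent with the feedback $B=B_0+CL$.
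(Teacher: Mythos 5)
Your proposal is correct and takes essentially the same route as the paper: the paper factors the argument through \autoref{Stoerung3} (removing $P$ via \autoref{Lemma 2} and \autoref{Stoerungssatz für D-N}) and \autoref{C Generator} (the identity $N^{B}=N^{B_0}+C$ with $D(N^{B})=D(C)$ from \autoref{lem:inc-dom}.(ii)), and then invokes \autoref{Maintheorem}, which is exactly the combination of steps you carry out, merely with condition (c) as the hub instead of the intermediate operator $A^{B}$. All the key ingredients you identify --- $N^{CL}=C$, the shift into $\rho(A_0)\cap\rho(A_0+P)$, and the boundedness of $N^{A_m+P}_\lambda-N^{A_m}_\lambda$ --- coincide with those used in the paper's auxiliary lemmas.
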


To prove the previous two theorems we use a series of auxiliary results.
First we show the equivalences of (a) and (b) in case $P=0$.

\begin{lem}\label{C bounded}
Let $C \colon D(C) \subset \dX \rightarrow \dX$ be relatively $N^{B_0}$-bounded of bound $0$. 
Then the following statements are equivalent.
\begin{enumerate}[(a)]
\item $A^{B_0}$generates an analytic semigroup of angle $\alpha > 0$ on $X$.
\item $A^{B}$ generates an analytic semigroup of angle $\alpha > 0$ on $X$.
\end{enumerate}
\end{lem}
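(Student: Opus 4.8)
The plan is to reduce the claimed equivalence to a perturbation statement on the boundary space $\dX$, exploiting the two associated Dirichlet-to-Neumann operators. The decisive algebraic observation is that, since $LL_0=\Id_\dX$, the feedbacks $B_0$ and $B=B_0+CL$ give rise to Dirichlet-to-Neumann operators differing exactly by $C$:
\[
N^B x = BL_0 x = (B_0+CL)L_0 x = B_0L_0 x + C(LL_0)x = N^{B_0}x + Cx .
\]
By \autoref{lem:inc-dom}(i) we have $D(B)=D(B_0)$, hence $D(N^B)=D(N^{B_0})$, and the identity $N^B=N^{B_0}+C$ holds on this common domain. Thus modifying the feedback on $X$ from $B_0$ to $B$ amounts precisely to an additive perturbation of $N^{B_0}$ by $C$ on $\dX$.

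Next I would apply \autoref{Maintheorem} to each of the two Wentzell operators. The operator $A_0$ depends only on $A_m$ and $L$, so it is unchanged by the choice of feedback and parts (i), (iii) of \autoref{masu} are unaffected. For part (ii) observe that every $f\in D(A_0)$ satisfies $Lf=0$, whence $CLf=0$ and $Bf=B_0f$ on $D(A_0)$; consequently $B$ is relatively $A_0$-bounded with the same bound as $B_0$, and \autoref{masu} holds for the feedback $B$ as well. Invoking the equivalence (a)$\iff$(b) of \autoref{Maintheorem} (with $D=A$) for both feedbacks, and cancelling the common sectoriality requirement on $A_0$, reduces statements (a) and (b) of the lemma to the single equivalence: $N^{B_0}$ generates an analytic semigroup of angle $\alpha$ if and only if $N^{B}$ does.

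It remains to establish this last equivalence by perturbation. Since $N^B=N^{B_0}+C$ with $C$ relatively $N^{B_0}$-bounded of bound $0$, \cite[Lem.~III.2.6]{EN:00} shows that $N^{B_0}$ generates an analytic semigroup of angle $\alpha$ if and only if $N^{B_0}+C=N^B$ does. To run this in the reverse direction one uses the standard fact that a perturbation of relative bound $0$ with respect to $N^{B_0}$ also has relative bound $0$ with respect to $N^{B_0}+C$, so the lemma applies symmetrically. I do not anticipate a serious obstacle: the entire content is the identity $N^B=N^{B_0}+C$, after which the result follows from \autoref{Maintheorem} and the bound-$0$ perturbation theorem. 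The only points deserving care are verifying that \autoref{masu} transfers to the feedback $B$ — which, as noted, rests on $CL$ vanishing on $D(A_0)$ — and the bidirectionality of the perturbation argument.
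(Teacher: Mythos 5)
Your proposal is correct and follows essentially the same route as the paper's proof: the identity $N^{B}=N^{B_0}+C$ with $D(N^{B})=D(N^{B_0})$ via \autoref{lem:inc-dom}(i), the observation that $B$ and $B_0$ coincide on $D(A_0)\subset\ker(L)$ so that \autoref{masu} transfers, the bound-$0$ perturbation theorem on $\dX$, and \autoref{Maintheorem} to pass between $X$ and $\dX$. The only (immaterial) difference is that you spell out the bidirectionality of the perturbation step, which the paper subsumes in its citation of the perturbation theorem for analytic generators.
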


\begin{proof}
By \autoref{lem:inc-dom}.(i) the operator
\begin{equation*}
B := B_0 + CL , \quad D(B) = D(B_0)
\end{equation*}
is well-defined.
Since $D(A_0) \subset X_0$, the operators $B$ and $B_0$ coincide on $D(A_0)$. Hence,
$B$ is relatively $A_0$-bounded if and only if $B_0$ is relatively $A_0$-bounded of
bound $0$. 
Moreover, we have
\begin{equation*}
N^{B} = B L_0 = N^{B_0} + C , \quad D(N^{B}) = D(N^{B_0}). 
\end{equation*}
By \cite[Thm.~III.2.10]{EN:00} it then follows that $N^{B}$ generates an analytic semigroup 
of angle $\alpha > 0$ on $\dX$ if and only if $N^{B_0}$ does. 
The claim now follows by \autoref{Maintheorem}. 
\end{proof}

\begin{lem}\label{C Generator}
Let $N^{B_0}$ be relatively $C$-bounded of bound $0$ for some
$C \colon D(C) \subset \dX \rightarrow \dX$.  
Then the following statements are equivalent.
\begin{enumerate}[(a)]
\item $A^{CL}$ generates an analytic semigroup of angle $\alpha > 0$ on $X$.
\item  $A^B$ generates an analytic semigroup of angle $\alpha > 0$ on $X$.
\end{enumerate}
\end{lem}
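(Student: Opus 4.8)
The plan is to mirror the structure of the proof of \autoref{C bounded}, exchanging the roles played by $N^{B_0}$ and $C$, and then invoke \autoref{Maintheorem} as the final step. The decisive observation is once again the identity relating the Dirichlet-to-Neumann operator of the full feedback $B = B_0 + CL$ to the constituent operators. First I would record that, by \autoref{lem:inc-dom}.(ii), under the present hypothesis that $N^{B_0}$ is relatively $C$-bounded of bound $0$ we have $D(A_m)\cap D(B) = D(A_m)\cap D(CL)$, so that the operators $A^B$ and $A^{CL}$ are defined on comparable domains and the feedbacks $B$ and $CL$ agree up to the term $B_0$; in fact on $D(A_0)\subset X_0 = \ker(L)$ the operator $CL$ vanishes, so $CL$ is trivially relatively $A_0$-bounded of bound $0$, which is the regularity needed to apply \autoref{Maintheorem} to $A^{CL}$.

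Next I would compute the relevant Dirichlet-to-Neumann operators. For the feedback $CL$ one has $N^{CL} = CL L_0 = C L L_0 = C$ on $\dX$, using $LL_0 = \Id_\dX$; thus the boundary operator associated with $A^{CL}$ is exactly $C$. On the other hand, exactly as in the previous lemma, $N^{B} = B L_0 = N^{B_0} + C$ with $D(N^B) = D(N^{B_0})\cap D(C)$. Since by assumption $N^{B_0}$ is relatively $C$-bounded of bound $0$, the perturbation result \cite[Thm.~III.2.10]{EN:00} shows that $N^{B} = C + N^{B_0}$ generates an analytic semigroup of angle $\alpha > 0$ on $\dX$ if and only if $C = N^{CL}$ does. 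This is the genuine analytic-generation transfer between the two boundary operators, and it is the analogue of the step in \autoref{C bounded}.

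Finally I would combine these pieces through \autoref{Maintheorem} applied with $D=A$. For the feedback $B$ that theorem gives the equivalence of generation for $A^B$ with ``$A_0$ sectorial of angle $\alpha>0$ and $N^B$ generates an analytic semigroup of angle $\alpha>0$''; for the feedback $CL$ it gives the equivalence of generation for $A^{CL}$ with ``$A_0$ sectorial of angle $\alpha>0$ and $N^{CL}=C$ generates an analytic semigroup of angle $\alpha>0$.'' Since we have just shown $N^B$ and $C$ generate analytic semigroups of the same angle simultaneously, and the sectoriality condition on $A_0$ is common to both, statements (a) and (b) of the lemma are equivalent.

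The main obstacle I anticipate is purely bookkeeping rather than analytic: one must make sure the hypotheses of \autoref{Maintheorem} are legitimately met for \emph{both} feedbacks at once. In particular I must verify that $CL$ is relatively $A_0$-bounded (immediate, as noted, since $CL$ vanishes on $D(A_0)$) and that the domain identities from \autoref{lem:inc-dom}.(ii) are used correctly so that $D(N^B) = D(N^{B_0})$ coincides with the domain on which \cite[Thm.~III.2.10]{EN:00} compares $N^{B_0}+C$ to $C$. Once these domain matters are handled, the analytic-semigroup transfer is a direct citation and the rest is formal.
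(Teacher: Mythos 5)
Your proposal is correct and follows essentially the same route as the paper: establish $N^{B}=N^{B_0}+C$ with $D(N^{B})=D(C)$ via \autoref{lem:inc-dom}.(ii), transfer analytic generation between $N^{B}$ and $C=N^{CL}$ by \cite[Thm.~III.2.10]{EN:00} using that $N^{B_0}$ is relatively $C$-bounded of bound $0$, and conclude with \autoref{Maintheorem}. The only slip is in your final paragraph, where the domain identity should read $D(N^{B})=D(C)$ rather than $D(N^{B})=D(N^{B_0})$ (the latter is the identity from \autoref{C bounded}); under the present hypothesis $D(C)\subseteq D(N^{B_0})$, so your earlier formula $D(N^{B_0})\cap D(C)$ does indeed reduce to $D(C)$.
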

\begin{proof}
Let
\begin{equation*}
B := B_0 + CL , \quad D(B) = D(B_0)\cap D(CL).
\end{equation*}
By the same reasoning as in the previous proof we conclude that
$B$ is relatively $A_0$-bounded if and only if $B_0$ is relatively $A_0$-bounded of the same
bound $0$. Moreover, by \autoref{lem:inc-dom}.(ii) we have
\begin{align*}
x\in D\bigl(N^B\bigr)
&\quad\iff\quad L_0x\in D(B)\\
&\quad\iff\quad L_0x\in D(B_0)\cap D(CL)\cap D(A_m)\\
&\quad\iff\quad L_0x\in D(CL)\cap D(A_m)\\
&\quad\iff\quad L_0x\in D(CL)\\
&\quad\iff\quad x\in L\,D(CL)\subseteq D(C).
\end{align*}
This implies
\begin{equation*}
N^B = B L_0 = N^{B_0} + C , \quad D(N^{B}) = D(C). 
\end{equation*}
By \cite[Thm.~III.2.10]{EN:00} it follows that $N^{B}$ generates an analytic semigroup 
of angle $\alpha > 0$ on $\dX$ if and only if $C$ does. 
The claim then follows by \autoref{Maintheorem}. 
\end{proof}

Next we study how Dirichlet operators behave under perturbations.

\begin{lem}\label{Lemma 2}
Let $P \colon D(P) \subset X \rightarrow X$ be a relatively $A_m$-bounded perturbation. Then for $\lambda \in \rho(A_0) \cap \rho(A_0+P)$ the Dirichlet operator
$L^{A_m+P}_\lambda\in\sL(\dX,X)$ exists and satisfies 
\begin{equation}\label{eq:id-pert-L_lambda}
L^{A_m+P}_\lambda - L^{A_m}_\lambda 
= R(\lambda,A_0+P)PL^{A_m}_\lambda
= R(\lambda,A_0)PL^{A_m+P}_\lambda .
\end{equation}
\end{lem}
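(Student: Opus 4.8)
The plan is to read off a candidate for $L^{A_m+P}_\lambda$ from the first equality in \eqref{eq:id-pert-L_lambda} and then verify directly that it has the required properties; this constructs the Dirichlet operator (so no appeal to closedness of $A_m+P$ is needed) and proves the first identity in one stroke. Concretely, I would set
\[
\widetilde{L} := L^{A_m}_\lambda + R(\lambda,A_0+P)\,P\,L^{A_m}_\lambda,
\]
which is well defined since $\lambda\in\rho(A_0+P)$ gives $R(\lambda,A_0+P)\in\sL(X)$, and $L^{A_m}_\lambda\in\sL(\dX,X)$ exists because $\lambda\in\rho(A_0)$ (by \cite[Lem.~1.2]{Gre:87}).

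First I would check that $\widetilde{L}$ is bounded. The range of $L^{A_m}_\lambda$ lies in $\ker(\lambda-A_m)\subseteq D(A_m)\subseteq D(P)$, where $A_m$ acts as $\lambda\,\Id$; hence relative $A_m$-boundedness of $P$ yields $\|PL^{A_m}_\lambda x\|\le(a|\lambda|+b)\,\|L^{A_m}_\lambda x\|$, so that $PL^{A_m}_\lambda\in\sL(\dX,X)$ and therefore $\widetilde{L}\in\sL(\dX,X)$. This boundedness step, together with the bookkeeping that the perturbing term lands in $D(A_0)$, is the only place where the hypothesis on $P$ is genuinely used and is where I expect the (mild) technical care to be concentrated.

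Next I would show that $\widetilde{L}$ solves the perturbed abstract Dirichlet problem at $\lambda$. Fix $x\in\dX$ and write $f:=L^{A_m}_\lambda x$ and $w:=R(\lambda,A_0+P)\,Pf\in D(A_0+P)=D(A_0)$, so $g:=\widetilde{L}x=f+w$. For the boundary value, $Lf=x$ and $w\in\ker(L)$ give $Lg=x$. For the equation, I use $A_mf=\lambda f$ together with $(\lambda-(A_0+P))w=Pf$, noting that on $D(A_0)$ the operators $A_0+P$ and $A_m+P$ coincide; a one-line computation then gives $(\lambda-(A_m+P))g=0$. Thus $g\in\ker(\lambda-(A_m+P))$ with $Lg=x$, and such $g$ is unique because any two solutions differ by an element of $\ker(\lambda-(A_m+P))\cap\ker(L)=\ker(\lambda-(A_0+P))=\{0\}$. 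Hence $L^{A_m+P}_\lambda$ exists, is bounded, equals $\widetilde{L}$, and satisfies the first equality in \eqref{eq:id-pert-L_lambda}.

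Finally, for the second equality I would run the symmetric computation: with $g:=L^{A_m+P}_\lambda x$ and $f:=L^{A_m}_\lambda x$ one has $g-f\in D(A_0)$, and from $A_mf=\lambda f$ and $A_mg=\lambda g-Pg$ it follows that $(\lambda-A_0)(g-f)=Pg$, whence $g-f=R(\lambda,A_0)\,P\,L^{A_m+P}_\lambda x$. As indicated above, the argument is essentially routine once the boundedness of $PL^{A_m}_\lambda$ is secured, so the ``hard part'' is really just verifying that each perturbation term stays in the correct domain so that the closed operators may be applied and cancelled as claimed.
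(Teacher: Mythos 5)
Your proposal is correct and follows essentially the same route as the paper: you build the same candidate $L^{A_m}_\lambda + R(\lambda,A_0+P)PL^{A_m}_\lambda$, verify that it maps into $\ker(\lambda-(A_m+P))$ with the right trace, get uniqueness from $\ker(\lambda-(A_m+P))\cap\ker(L)=\ker(\lambda-(A_0+P))=\{0\}$, and obtain the second identity by symmetry. The only cosmetic difference is that you bound $PL^{A_m}_\lambda$ via the explicit estimate $\|Pf\|\le(a|\lambda|+b)\|f\|$ on $\ker(\lambda-A_m)$, whereas the paper factors $P$ through the graph norm on $[D(A_m)]$; both are equivalent.
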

\begin{proof}
Let $[D(A_m)]:=(D(A_m),\|\cdot\|_{A_m})$ for the graph norm $\|\cdot\|_{A_m}:=\|\cdot\|_X+\|A_m\cdot\|_X$.
Then $P \colon [D(A_m)] \rightarrow X$ and $L^{A_m}_\lambda \colon \partial X \rightarrow [D(A_m)]$ are bounded, hence $PL^{A_m}_\lambda:\dX\to X$ is bounded as well.
This implies that
\begin{equation*}
T: = L^{A_m}_\lambda + R(\lambda,A_0 + P)PL^{A_m}_\lambda\in\sL(\dX,X) .
\end{equation*}
Since 
\begin{align*}
(A_m+P - \lambda) T x &=  (A_m+P - \lambda) L^{A_m}_\lambda x 
+ (A_m+P - \lambda) R(\lambda,A_0+P)PL^{A_m}_\lambda \\
&= P L^{A_m}_\lambda x - P L^{A_m}_\lambda x = 0 ,
\end{align*}
we have $\rg(T) \subseteq \ker(\lambda - A_m -P)$. 
Moreover, from
\begin{equation*}
\rg\bigl(R(\lambda,A_0+P)PL^{A_m}_\lambda\bigr) \subset 
D(A_0+P) = D(A_0) \subset \ker(L)
\end{equation*}
it follows that $LT x = LL^{A_m}_\lambda x = x$.
Hence, $L|_{\ker(\lambda - A_m - P)}$ is surjective
with right-inverse $T$. 
Since $\ker(\lambda - A_m - P) \cap X_0 
\subset \ker(\lambda - A_0 - P) = \{ 0 \}$ we conclude
that $L|_{\ker(\lambda - A_m - P)}$ is injective as well.
This implies that it is invertible with inverse $L^{A_m+P}_\lambda=T$ and proves the first identity in \eqref{eq:id-pert-L_lambda}. The second one follows by changing the roles of $A_m$ and $A_m+P$.
\end{proof}

Next we consider perturbations of Dirichlet-to-Neumann operators.

\begin{prop}\label{Stoerungssatz für D-N}
Let $P \colon D(P) \subset X \rightarrow X$ be a relatively $A_m$-bounded perturbation. Then for $\lambda \in \rho(A_0) \cap \rho(A_0+P)$ the perturbed Dirichlet-to-Neumann operator $N^{A_m+P}_\lambda$ exists, $D(N^{A_m}_\lambda)=D(N^{A_m+P}_\lambda)$ and the difference $N^{A_m}_\lambda - N^{A_m+P}_\lambda$ is bounded. 
\end{prop}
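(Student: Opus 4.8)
The plan is to derive all three assertions directly from the identity \eqref{eq:id-pert-L_lambda} of \autoref{Lemma 2}. First, since $\lambda\in\rho(A_0)\cap\rho(A_0+P)$, that lemma already guarantees that the Dirichlet operator $L^{A_m+P}_\lambda\in\sL(\dX,X)$ exists. Hence $N^{A_m+P}_\lambda=BL^{A_m+P}_\lambda$ with $D(N^{A_m+P}_\lambda)=\{x\in\dX:L^{A_m+P}_\lambda x\in D(B)\}$ is well defined, which settles the existence claim. For the equality of domains I would use the first form of \eqref{eq:id-pert-L_lambda}, namely $L^{A_m+P}_\lambda x-L^{A_m}_\lambda x=R(\lambda,A_0+P)PL^{A_m}_\lambda x$: its right-hand side has range in $D(A_0+P)=D(A_0)\subseteq D(B)$, so $L^{A_m}_\lambda x\in D(B)$ holds if and only if $L^{A_m+P}_\lambda x\in D(B)$. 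This is precisely $D(N^{A_m}_\lambda)=D(N^{A_m+P}_\lambda)$.

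For the boundedness of the difference the key is to switch to the \emph{second} form of \eqref{eq:id-pert-L_lambda}, so that on the common domain
\begin{equation*}
N^{A_m+P}_\lambda-N^{A_m}_\lambda=B\bigl(L^{A_m+P}_\lambda-L^{A_m}_\lambda\bigr)=BR(\lambda,A_0)\,PL^{A_m+P}_\lambda .
\end{equation*}
Here $PL^{A_m+P}_\lambda\in\sL(\dX,X)$ by the same argument as in the proof of \autoref{Lemma 2} (with $A_m+P$ in place of $A_m$, using $D(A_m+P)=D(A_m)$ with equivalent graph norms), while $BR(\lambda,A_0)\in\sL(X,\dX)$ because $B$ is relatively $A_0$-bounded, hence bounded from $[D(A_0)]$ into $\dX$, and $R(\lambda,A_0)$ maps $X$ boundedly into $[D(A_0)]$. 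Since the resulting operator is defined and bounded on all of $\dX$ (its image lies in $D(A_0)\subseteq D(B)$), the difference $N^{A_m}_\lambda-N^{A_m+P}_\lambda$, a priori defined only on $D(N^{A_m}_\lambda)$, extends to a bounded operator on $\dX$.

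The one step that actually requires care — and the only real obstacle — is the choice between the two representations in \eqref{eq:id-pert-L_lambda}: the difference must be written with $R(\lambda,A_0)$ on the outside, not $R(\lambda,A_0+P)$. Only then does the outer factor $BR(\lambda,A_0)$ become bounded under the standing hypothesis that $B$ is relatively $A_0$-bounded; the alternative $BR(\lambda,A_0+P)$ would demand relative $(A_0+P)$-boundedness of $B$, which is not among the assumptions. Once this representation is fixed, the remaining verifications are routine consequences of \autoref{Lemma 2} and the relative boundedness of $P$ and $B$.
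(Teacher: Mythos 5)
Your proposal is correct and follows essentially the same route as the paper: existence of $N^{A_m+P}_\lambda$ from \autoref{Lemma 2}, equality of domains because the difference $L^{A_m+P}_\lambda-L^{A_m}_\lambda$ has range in $D(A_0)\subseteq D(B)$, and boundedness of the difference via the representation $BR(\lambda,A_0)PL^{A_m+P}_\lambda$ with the resolvent of $A_0$ (not of $A_0+P$) on the outside, exactly as in the paper's displayed inclusion $N^{A_m}_\lambda - N^{A_m+P}_\lambda \supseteq -BR(\lambda,A_0)PL^{A_m+P}_\lambda\in\sL(\dX)$. The only cosmetic difference is that you justify the domain equality with the first identity in \eqref{eq:id-pert-L_lambda} while the paper invokes the second; both describe the same difference operator, so the arguments coincide.
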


\begin{proof} 
Since 
\begin{equation*}
\rg\bigl(R(\lambda,A_0)(A_m-\lambda)L^{A_m+P}_\lambda\bigr) \subset D(A_0) \subset D(B),
\end{equation*}
by \autoref{Lemma 2} it follows 
that $D(N^{A_m}_\lambda) = D(N^{A_m+P}_\lambda)$. Moreover,
from \eqref{eq:id-pert-L_lambda} we conclude
\begin{equation*} 
N^{A_m}_\lambda - N^{A_m+P}_\lambda 
= BL^{A_m}_\lambda - BL^{A_m+P}_\lambda 
\supseteq -BR(\lambda,A_0)PL^{A_m+P}_\lambda\in\sL(\dX).
\qedhere
\end{equation*}
\end{proof}

To conclude the proofs of \autoref{Stoerung} and \autoref{Stoerung2}, we need one further result. It shows that the assertion~(a) in both results is independent under the perturbation $P$. 

\begin{lem}\label{Stoerung3}
Let $P \colon D(P) \subset X \rightarrow X$ relatively $A_m$-bounded with
$A_0$-bound $0$.   
Then the following statements are equivalent.
\begin{enumerate}[(a)]
\item $A^B$ generates an analytic semigroup of angle $\alpha > 0$ on $X$.
\item $(A+P)^B$ generates an analytic semigroup of angle $\alpha > 0$ on $X$.
\end{enumerate}
\end{lem}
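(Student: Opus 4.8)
The plan is to reduce both generation properties to conditions on the ``interior'' operator and on the Dirichlet-to-Neumann operator by invoking \autoref{Maintheorem} twice, and then to transfer these conditions from the unperturbed to the perturbed data using the perturbation results of this section. Note first that $(A+P)^B$ is nothing but the operator with generalized Wentzell boundary conditions associated to the maximal operator $A_m+P$ and the feedback $B=B_0+CL$, since its boundary condition reads $L(A_m+P)f=Bf$. Applying \autoref{Maintheorem} with $D=A$ to $A_m$, statement~(a) is equivalent to: $A_0$ is sectorial of angle $\alpha$ on $X$ and $N=BL_0$ generates an analytic semigroup of angle $\alpha$ on $\dX$. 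The goal is to show that statement~(b) carries the analogous characterization with $A_0$ replaced by $A_0+P$ and $N$ by $N^{A_m+P}=BL^{A_m+P}_0$, and that the two characterizations coincide.

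First I would check that \autoref{Maintheorem} is applicable to $(A+P)^B$, i.e. that \autoref{masu} holds for $A_m+P$ together with $L$ and $B$. Since $P$ is relatively $A_m$-bounded with $A_0$-bound $0$, it is relatively $A_0$-bounded of bound $0$; hence $A_0+P$ is again a weak Hille--Yosida operator by \cite[Lem.~III.2.5]{EN:00}, and as $D(A_0+P)=D(A_0)$ with equivalent graph norms, $B$ remains relatively $(A_0+P)$-bounded of bound $0$. For the Dirichlet operator I would fix $\lambda_0\in\rho(A_0)\cap\rho(A_0+P)$ (nonempty, since both are weak Hille--Yosida operators) and pass to the shifted operators; this affects neither (a) nor (b), because generation of an analytic semigroup of angle $\alpha$ is invariant under the addition of a scalar, and the shift changes the feedback $B$, and hence $N$, only by the bounded operator $\lambda_0 L$. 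After this reduction $0\in\rho(A_0)\cap\rho(A_0+P)$, so $L^{A_m+P}_0$ exists and is bounded by \autoref{Lemma 2}, and \autoref{Maintheorem} applies to $(A+P)^B$ and characterizes (b) through sectoriality of $A_0+P$ and generation by $N^{A_m+P}$.

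It then remains to match the two characterizations. For the interior part, $A_0$ is sectorial of angle $\alpha$ if and only if $A_0+P$ is, since $P$ is relatively $A_0$-bounded of bound $0$ and such perturbations preserve sectoriality and its angle via the Neumann series underlying \cite[Lem.~III.2.6]{EN:00}. For the boundary part, \autoref{Stoerungssatz für D-N} gives $D(N)=D(N^{A_m+P})$ together with boundedness of $N-N^{A_m+P}$; since bounded perturbations leave generation of an analytic semigroup and its angle unchanged by \cite[Thm.~III.2.10]{EN:00}, $N$ generates an analytic semigroup of angle $\alpha$ if and only if $N^{A_m+P}$ does. Chaining these equivalences yields (a)$\iff$(b).

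The hard part will be ensuring that the whole abstract framework survives the perturbation, and in particular that the perturbed Dirichlet operator $L^{A_m+P}_0$ exists and that $N^{A_m+P}$ differs from $N$ only by a bounded operator --- precisely the content of \autoref{Lemma 2} and \autoref{Stoerungssatz für D-N}. The delicate bookkeeping is that these two results only produce $L^{A_m+P}_\lambda$ and $N^{A_m}_\lambda-N^{A_m+P}_\lambda$ for $\lambda\in\rho(A_0)\cap\rho(A_0+P)$, so one must carry the shift by $\lambda_0$ through consistently, checking that shifting the maximal operator changes $B$ (hence both $N$ and $N^{A_m+P}$) only by bounded operators and therefore leaves all ``analytic of angle $\alpha$'' statements intact.
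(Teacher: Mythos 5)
Your argument is correct and follows essentially the same route as the paper's proof: shift by some $\lambda\in\rho(A_0)\cap\rho(A_0+P)$ so that \autoref{masu} holds for both the unperturbed and the perturbed data (using \autoref{Lemma 2} for the existence of the perturbed Dirichlet operator), apply \autoref{Maintheorem} twice, and match the two characterizations via \cite[Lem.~III.2.6]{EN:00} for the interior part and \autoref{Stoerungssatz für D-N} together with bounded perturbation of analytic generators for the Dirichlet-to-Neumann part. Your treatment of how the shift affects $B$ and $N$ is in fact slightly more explicit than the paper's.
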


\begin{proof}
Since $A_0$ is a weak Hille--Yosida operator and $P$ is relatively $A_0$-bounded of 
bound $0$, by \cite[Lem.~III.2.6]{EN:00} there exists a $\lambda \in \rho(A_0) \cap \rho(A_0+P)$ and
$A_0- \lambda$, $A_0 + P - \lambda$ are again weak Hille--Yosida operators. 
Since $B$ is relatively $A_0$-bounded of bound $0$ a simple computation shows that it is also
relatively $(A_0 - \lambda)$- and $(A_0 + P - \lambda)$-bounded of bound $0$.
Moreover, by \autoref{Lemma 2} the operators $L_0^{A_m - \lambda}$ and $L_0^{A_m + P - \lambda}$
exist and are bounded. 
Hence, $A_0 - \lambda$ and $A_0 + P - \lambda$ both satisfy \autoref{masu}.

Next we check the conditions in \autoref{Maintheorem}.
By \cite[Lem.~III.2.6]{EN:00} the operator $A_0 - \lambda$ is sectorial of angle $\alpha >0$ on $X$ if and only if $A_0 + P - \lambda$ is. 
Moreover, by \autoref{Stoerungssatz für D-N} $N^{A_m- \lambda}$ generates an analytic 
semigroup of angle $\alpha > 0$ if and only if $N^{A_m + P - \lambda}$ does. 
Applying \autoref{Maintheorem} to $A_0 - \lambda$, $N^{A_m - \lambda}$ and
$A_0 + P - \lambda$, $N^{A_m + P - \lambda}$, respectively, the claim follows. 
\end{proof}

\begin{proof}[Proof of \autoref{Stoerung} and \autoref{Stoerung2}]
By \autoref{Stoerung3} assertion~(a) is independent of $P$ while by \autoref{C bounded} and \autoref{C Generator}, respectively, for $P=0$ it is equivalent to (b). Since the equivalence of (b) and (c) follows \autoref{Maintheorem} the proof is complete.
\end{proof}

\section{Examples}\label{Expl}

\subsection{Second Order Differential Operators on $\mathbf{\rC([0,1],\C^n)}$.}

For $n\in\N$ consider functions $a_i \in\rC[0,1]\cap\rC^1(0,1)$, $1\le i\le n$, being strictly positive on $(0,1)$ such that $\frac{1}{a_i} \in L^1[0,1]$. Let
$a := \diag(a_1,\dots, a_n)$ and $b, c \in \rC([0,1],\MnC)$.
Moreover, define the maximal operator $A_m:D(A_m)\subset\rC([0,1],\C^n)\to\rC([0,1],\C^n)$ by
\begin{equation*}
A_m := a f''+ bf'
+ cf, \quad D(A_m) := \bigl\{ f \in \rC([0,1],\C^n)\cap \rC^2((0,1),\C^n) \colon A_m f \in \rC([0,1],\C^n) \bigr\}
\end{equation*}
and take $B \in \mathcal{L}(C^1([0,1],\C^{n}),\C^{2n})$. 

\begin{cor}\label{exa1} We have
$D(A_m)\subset\rC^1([0,1],\C^n) = D(B)$ and 
\begin{equation*}
A\subseteq A_m, \quad
D(A) = \left\{ f \in D(A_m) \colon \begin{psmallmatrix} (A_m f)(0) \\ (A_m f)(1) \end{psmallmatrix}
= Bf \right\}
\end{equation*}
generates a compact and analytic semigroup of angle $\frac{\pi}{2}$ on
$\rC([0,1],\C^n)$. 
\end{cor}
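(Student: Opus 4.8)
The plan is to realise the operator from \autoref{exa1} inside the abstract setting of \autoref{set:AS} and then read off the conclusion from \autoref{Maintheorem} and \autoref{kompakt}. Concretely I take the state space $X:=\rC([0,1],\C^n)$, the boundary space $\C^{2n}$, the trace operator $Lf:=\binom{f(0)}{f(1)}$, the maximal operator $A_m:=af''+bf'+cf$ as given, and the feedback operator $B$. Then the prescribed boundary condition $\binom{(A_mf)(0)}{(A_mf)(1)}=Bf$ is precisely $LA_mf=Bf$, so the operator in the statement is $A=A^B$ from \eqref{eq:W-BC}. The key structural simplification is that the boundary space $\C^{2n}$ is finite-dimensional: once I check that $N=BL_0$ is everywhere defined and bounded, $N\in\sL(\C^{2n})$ automatically generates an analytic semigroup of angle $\tfrac\pi2$ and has compact resolvent. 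Hence by \autoref{Maintheorem} (with $D=A$) everything reduces to showing that the Dirichlet operator $A_0$ is sectorial of angle $\tfrac\pi2$ on $X$ and has compact resolvent, the assertion then following from \autoref{Maintheorem} together with \autoref{kompakt}.

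First I would dispose of the abstract hypotheses. For the regularity $D(A_m)\subset\rC^1([0,1],\C^n)$: given $f\in D(A_m)$ put $g:=A_mf\in X$; then $h:=f'$ solves the first-order linear system $h'=a^{-1}(g-bh-cf)$ on $(0,1)$, and since $a^{-1}=\diag(a_1^{-1},\dots,a_n^{-1})$ has entries in $L^1$ while $b,c,g,f$ are bounded, the coefficients and inhomogeneity of this system lie in $L^1$, so $h=f'$ extends continuously to $[0,1]$. This gives $D(A_m)\subset\rC^1=D(B)$, and the same integral representation shows that $A_m$ is closed and that sets bounded for the graph norm of $A_0$ are bounded in $\rC^1$ with equicontinuous derivatives (absolute continuity of the integral of $a^{-1}$), hence relatively compact in $\rC^1$ by Arzel\`a--Ascoli. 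In particular $[D(A_0)]\hookrightarrow\rC^1$ is compact, so Ehrling's lemma yields $\|f\|_{\rC^1}\le\eps\|A_0f\|+C_\eps\|f\|$; composing with the bounded map $B\colon\rC^1\to\C^{2n}$ gives \autoref{masu}(ii), i.e.\ $B$ is relatively $A_0$-bounded with bound $0$. Since $L$ is surjective and $A_0$ is invertible (replacing $A_m$ by $A_m-\lambda$ for large $\lambda$ if necessary, which only shifts $A$ and $N$ by a constant), \cite[Lem.~1.2]{Gre:87} furnishes the bounded Dirichlet operator $L_0$, so \autoref{masu}(iii) holds and $N=BL_0$ is bounded as claimed; and once $A_0$ is shown sectorial, \autoref{masu}(i) holds too.

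The remaining, and main, point is that $A_0=af''+bf'+cf$ with $f(0)=f(1)=0$ is sectorial of angle $\tfrac\pi2$ on $X$ with compact resolvent. Because $a$ is diagonal, the principal part $af''$ with Dirichlet conditions is the direct sum of the scalar operators $a_i\partial^2$; once each of these is sectorial of angle $\tfrac\pi2$ on $\rC[0,1]$, so is their direct sum, and then the lower-order term $bf'+cf$ — relatively bounded with bound $0$ by the Ehrling estimate applied to the principal part — is absorbed by \cite[Thm.~III.2.10]{EN:00} without changing the angle. For the scalar degenerate operator $a_i\partial^2$ I would obtain the sector as follows: a maximum-principle argument, evaluating $\lambda f-a_if''=g$ at a point where $|f|$ is maximal, yields $\operatorname{Re}\lambda\,\|f\|_\infty\le\|g\|_\infty$ and hence dissipativity, while the full angle $\tfrac\pi2$ comes from realising $a_i\partial^2$ as the self-adjoint negative operator associated with the Dirichlet form on $L^2\!\left(\tfrac{dx}{a_i}\right)$ — a finite measure precisely because $a_i^{-1}\in L^1$ — and transferring its angle-$\tfrac\pi2$ analyticity to $\rC$; this one-dimensional degenerate case is classical in the theory of Feller diffusions. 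Compactness of the resolvent is immediate from the compact embedding $[D(A_0)]\hookrightarrow\rC^1\hookrightarrow\rC([0,1],\C^n)$, the last inclusion being compact by Arzel\`a--Ascoli.

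I expect the genuine obstacle to be exactly this last step: establishing the \emph{optimal} angle $\tfrac\pi2$ for the scalar degenerate operator on a space of continuous functions, where boundary degeneracy $a_i(0)=0$ or $a_i(1)=0$ is allowed and only $a_i^{-1}\in L^1$ is assumed. The maximum principle is elementary but only produces dissipativity (a contraction semigroup), not the passage through the imaginary axis needed for angle $\tfrac\pi2$; that sharper estimate is where the structural input — the self-adjoint form realisation together with the integrability of $a_i^{-1}$ — is indispensable. Once it is in place, \autoref{Maintheorem} delivers the analytic semigroup of angle $\tfrac\pi2$ and \autoref{kompakt} its compactness.
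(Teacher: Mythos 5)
Your proposal is correct and follows the same overall architecture as the paper's proof: verify \autoref{masu}, observe that $N=BL_0$ is automatically bounded on the finite-dimensional boundary space $\C^{2n}$ and hence generates a compact analytic semigroup of angle $\frac{\pi}{2}$, reduce the sectoriality and compact resolvent of $A_0$ to the scalar operators $a_i\frac{d^2}{ds^2}$ via the diagonal structure of $a$, and conclude with \autoref{Maintheorem} and \autoref{kompakt}. The differences lie in the middle. The paper first invokes \autoref{Stoerung} with $Pf=bf'+cf$ to reduce to $b=c=0$, which makes $\ker(A_m)$ the space of affine functions and the Dirichlet operator $L_0$ completely explicit; you instead keep the lower-order terms inside $A_m$, absorb them into $A_0$ by \cite[Thm.~III.2.10]{EN:00}, and repair the possible non-invertibility of $A_0$ by replacing $A_m$ with $A_m-\lambda$. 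That shift is legitimate (the shifted operator is the Wentzell operator for the pair $(A_m-\lambda,\,B-\lambda L)$, and all hypotheses persist since $L$ is bounded and $\dX$ is finite-dimensional), but it costs you the explicit formula for $L_0$, which is why the paper prefers the reduction via \autoref{Stoerung}. You also prove by hand --- using $a^{-1}\in\rL^1$, the first-order system for $f'$, Arzel\`a--Ascoli and Ehrling --- what the paper imports from \cite[Cor.~4.1, Steps (ii)--(iii)]{EF:05}, namely $D(A_m)\subset\rC^1([0,1],\C^n)$, the relative $A_0$-bound $0$ of $B$, and compactness of the resolvent; these arguments are sound and make the verification more self-contained. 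The one place where your text is thinner than a proof is the angle $\frac{\pi}{2}$ for the degenerate scalar operator $a_i\frac{d^2}{ds^2}$ on $\rC[0,1]$: the self-adjoint realization on $\rL^2(dx/a_i)$ gives the angle there, but the transference of the sector estimate to $\rC[0,1]$ is asserted rather than carried out. The paper has exactly the same dependence, discharged by citing \cite{EF:05}, so this is not a gap relative to the paper's own standard, but it is the step you would need to fill in for a fully self-contained argument.
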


\begin{proof}
We consider $X := \rC([0,1],\C^n) = \rC[0,1] \times \dots
\times \rC[0,1]$ equipped with the norm
$\| f \|_{1,\infty} := \|f_1\|_\infty + \dots + \|f_n\|_\infty$, $\dX := \C^{2n}$ and define $L\in\sL(X,\dX)$ by
$Lf := \binom{f(0)}{f(1)}$. Then as in \cite[Cor. 4.1 Step (iii)]{EF:05} it follows that $D(A_m)\subset D(B)$, hence $A$ coincides with the operator defined in \eqref{eq:W-BC}. Since 
\begin{equation*}
Pf := bf' + c f , \quad 
D(P) :=\rC^1\bigl([0,1], \C^n\bigr)
\end{equation*}
is a relatively $A_m$-bounded with $A_0$-bound $0$ (see Step 4 below), we assume by \autoref{Stoerung} without loss of generality that $b=c=0$. 

Next we verify \autoref{masu} and the hypotheses of \autoref{Maintheorem}.

\setcounter{stpi}{0}
\begin{stepi}\label{Step_1_a}
The abstract Dirichlet operator $L_0 \in \mathcal{L}(\partial X, X)$ exists. 
\end{stepi}
\begin{proof}
We have $\ker(A_m) = \text{lin}\{\epsilon_0, \epsilon_1\}$ for
\begin{equation*}
\epsilon_0(s) := 1-s \quad\text{and}\quad \epsilon_1(s) := s,\ s\in [0,1].
\end{equation*}
A simple calculation then shows that $L_0 := (L|_{\ker(A_m)})^{-1}\in\sL(\dX,X)$ is given by
\begin{equation*}
L_0 \begin{psmallmatrix}
x_1 \\
\vdots \\
x_{2n}
\end{psmallmatrix} = 
\epsilon_0\cdot
\begin{psmallmatrix}
x_1 \\
\vdots \\
x_{n}
\end{psmallmatrix}
+
\epsilon_1\cdot
\begin{psmallmatrix}
x_{n+1} \\
\vdots \\
x_{2n}
\end{psmallmatrix}.
\end{equation*}
\end{proof}

\begin{stepi}\label{Step_2_a}
The operator $A_0$ on $X$ is sectorial of angle $\frac{\pi}{2}$ and has compact resolvent.
\end{stepi}
\begin{proof}
Let $A_i := a_i\cdot\frac{d^2}{ds^2}$ with domain
$D(A_i) := \{ g \in\rC[0,1]\cap\rC^2(0,1) \colon a_i\cdot g'' \in \rC[0,1] \}$ for $1\le i\le n$.
Then
\begin{equation*}
R(\lambda,A_0) = \diag\bigl(
R(\lambda,A_1),\ldots, R(\lambda,A_n)
\bigr).
\end{equation*}
Since by \cite[Cor. 4.1. Step (ii)]{EF:05} all $A_i$ are sectorial of angle $\frac{\pi}{2}$ and have compact resolvents on $\rC[0,1]$, 
the claim follows.
\end{proof}

\begin{stepi}
The maximal operator $A_m$ is densely defined and closed. 
\end{stepi}
\begin{proof}
Since $\rC^2([0,1],\C^n) \subset D(A_m)$, $A_m$ is densely defined. By \autoref{Step_1_a}, \autoref{Step_2_a}
and \cite[Lem. 3.2]{EF:05} it follows that $A_m$ is closed. 
\end{proof}

\begin{stepi}
The feedback operator $B$ is relatively $A_0$-bounded of bound $0$.
\end{stepi}
\begin{proof}
Since $D(B) =\rC^1([0,1],\C^n)$ it suffices to show
that the first derivative with domain $\rC^1([0,1],\C^n)$ is relatively $A_0$-bounded with bound $0$. Let $f\in D(A_0)$. Then by \cite[Cor. 4.1. Step (iii)]{EF:05} it follows
that for all $\epsilon > 0$ there exists a 
constant $C_\epsilon > 0$ such that
\begin{align*}
\| f' \|_{1,\infty} 
&\leq \epsilon\cdot \| A_1 f_1 \|_\infty  + \dots + \epsilon\cdot \| A_n f_n \|_\infty + C_\epsilon\cdot \| f_1 \|_\infty + \dots + C_\epsilon\cdot \| f_n \|_\infty \\
&= \epsilon\cdot \| A_0 f \|_{1,\infty} + C_\epsilon\cdot  \| f \|_{1,\infty}.
\qedhere
\end{align*}
\end{proof}

\begin{stepi}
The Dirichlet-to-Neumann operator $N$ generates an analytic, compact semigroup of angle $\frac{\pi}{2}$ on $\dX$. 
\end{stepi}
\begin{proof}
Since the boundary space $\dX$ is finite dimensional, 
$N$ is bounded. Hence $N$ generates an analytic, compact semigroup of angle $\frac{\pi}{2}$ on $\dX$.
\end{proof}

Summing up, by \autoref{Maintheorem} and \autoref{kompakt} the claim follows completing the proof.
\end{proof}

\begin{rem}
\autoref{exa1} generalizes \cite[Cor. 4.1]{EF:05} to arbitrary $n \in \N$.
\end{rem}

We give a particular choice for the operator $B$.

\begin{cor}
For $M_i, N_i \in \mathrm{M}_{2n\times n}(\C)$, $i=0,1$,
the operator
\begin{equation*}
A\subseteq A_m, \ D(A) = \left\{ f \in D(A_m) \colon
\begin{pmatrix}
(A_mf)(0) \\
(A_mf)(1)
\end{pmatrix}
=
M_0 f'(0) + M_1 f'(1) + N_0 f(0) + N_1 f(1)
\right\}
\end{equation*}
generates a compact and analytic semigroup of angle
$\frac{\pi}{2}$ on $\rC([0,1],\C^n)$. 
\end{cor}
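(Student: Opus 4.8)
The plan is to exhibit this Corollary as a direct specialization of \autoref{exa1}. All that must be checked is that the prescribed boundary map
\[
Bf := M_0 f'(0) + M_1 f'(1) + N_0 f(0) + N_1 f(1)
\]
belongs to $\sL(\rC^1([0,1],\C^n),\C^{2n})$; once this is established, \autoref{exa1} applies verbatim and yields the claim.

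To verify this, I would observe that the four point evaluations $f\mapsto f(0)$, $f\mapsto f(1)$, $f\mapsto f'(0)$, $f\mapsto f'(1)$ are each bounded linear maps from $\rC^1([0,1],\C^n)$ into $\C^n$: indeed $\|f(0)\|,\|f(1)\|\le\|f\|_\infty$ and $\|f'(0)\|,\|f'(1)\|\le\|f'\|_\infty$, so each is dominated by the $\rC^1$-norm $\|f\|_\infty+\|f'\|_\infty$. Since the fixed matrices $M_0,M_1,N_0,N_1$ act as bounded operators $\C^n\to\C^{2n}$ (both spaces being finite dimensional), the linear combination $B$ is a finite sum of bounded maps composed with bounded matrices, hence itself bounded.

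With $B\in\sL(\rC^1([0,1],\C^n),\C^{2n})$ in hand, \autoref{exa1} gives at once that the operator $A$, whose boundary condition reads $\binom{(A_mf)(0)}{(A_mf)(1)}=Bf$, generates a compact and analytic semigroup of angle $\frac{\pi}{2}$ on $\rC([0,1],\C^n)$. There is no genuine obstacle here: the entire content of the argument is the routine boundedness check above, after which all the substantial work has already been carried out in the proof of \autoref{exa1}.
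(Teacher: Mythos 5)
Your proposal is correct and coincides with the paper's (implicit) argument: the corollary is stated as a particular choice of $B\in\sL(\rC^1([0,1],\C^n),\C^{2n})$ in \autoref{exa1}, and the only thing to check is the boundedness of the point evaluations of $f$ and $f'$ composed with the fixed matrices, exactly as you do. Nothing further is needed.
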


We remark that second order differential operators on spaces of functions $f:[0,1]\to\C^n$ can be used to describe diffusion- and waves on networks. For some recent results in the $\rL^p$-context for operators with generalized Robin-type boundary conditions we refer to \cite{EKF:17}.

\subsection{Banach Space-Valued Second Derivative}

We associate to an arbitrary Banach space $Y$ the Banach space
$X :=\rC([0,1],Y)$ of all continuous functions on $[0,1]$ with values in 
$Y$ equipped with the sup-norm. Moreover, we take $P \in \mathcal{L}(\rC^1([0,1],Y), X))$,
$\Phi \in \mathcal{L}(X,Y^2)$
and an operator $(\sC,D(\sC))$ on $Y^2$. Then the following holds.

\begin{cor}
The operator $\sC$ generates an analytic semigroups of angle $\alpha \in (0,\frac{\pi}{2}]$ on $Y^2$
if and only if the operator
\begin{align*}
Af &:= f'' + P f ,\\ D(A) &:= \left\{ f \in\rC^2([0,1],Y) \colon 
\tbinom{f(0)}{f(1)}\in D(\sC), 
\tbinom{f''(0) + Pf(0)}{f''(1) + Pf(1)}
=
\Phi f+ \sC\tbinom{f(0)}{f(1)} \right\}
\end{align*}
generates an analytic semigroup of angle $\alpha \in (0,\frac{\pi}{2}]$ on $X$. 
\end{cor}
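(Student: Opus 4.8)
The plan is to read this statement as a direct instance of \autoref{Stoerung2}. First I would recast the problem in the abstract setting of \autoref{set:AS} by taking the state space $X:=\rC([0,1],Y)$, the boundary space $\dX:=Y^2$, the maximal operator $A_mf:=f''$ with $D(A_m):=\rC^2([0,1],Y)$, and the trace operator $Lf:=\tbinom{f(0)}{f(1)}\in\sL(X,\dX)$. Since $(A_mf)(j)=f''(j)$ and $(Pf)(j)$ is the value of $Pf\in X$ at $j\in\{0,1\}$, the boundary condition defining $D(A)$ reads $L(A_m+P)f=\Phi f+\sC Lf$. I would therefore set $B_0:=\Phi\in\sL(X,\dX)$ and $C:=\sC$, so that $B=B_0+CL$ and the operator $A=f''+Pf$ is exactly the operator $(A+P)^B$ from \eqref{eq:A+P^B}.

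Next I would check the standing hypotheses. The kernel $\ker(A_m)$ consists of the affine $Y$-valued functions, so the abstract Dirichlet operator exists and is bounded, namely $L_0\tbinom{y_0}{y_1}(s)=(1-s)y_0+sy_1$, giving \autoref{masu}(iii). I would then show that the Dirichlet second derivative $A_0$ is sectorial of angle $\tfrac{\pi}{2}$ on $X$; in particular it is a weak Hille--Yosida operator, so \autoref{masu}(i) holds. The feedback $B_0=\Phi$ is bounded and hence relatively $A_0$-bounded of bound $0$, which on $D(A_0)\subset\ker(L)$ also makes $B$ relatively $A_0$-bounded of bound $0$, giving \autoref{masu}(ii). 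Finally $N^{B_0}=\Phi L_0\in\sL(\dX)$ is bounded, so it is relatively $C$-bounded of bound $0$ for any choice of $C=\sC$, which is precisely the standing hypothesis of \autoref{Stoerung2}.

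It then remains to verify that $P\in\sL(\rC^1([0,1],Y),X)$ is relatively $A_m$-bounded with $A_0$-bound $0$. As $D(A_m)=\rC^2([0,1],Y)\subset\rC^1([0,1],Y)=D(P)$ and $\|Pf\|_X\le\|P\|\cdot\|f\|_{\rC^1}$, this reduces to the interpolation (Landau--Kolmogorov type) estimate $\|f'\|_\infty\le\eps\,\|f''\|_\infty+C_\eps\,\|f\|_\infty$, valid for $Y$-valued functions by the same one-dimensional argument as in the scalar case, since it uses only the mean value inequality and so is insensitive to the target space. Feeding this estimate into the bound for $P$ yields $A_0$-bound $0$, exactly as in the proof of \autoref{exa1}.

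With all hypotheses in place, \autoref{Stoerung2} gives the equivalence (a)$\iff$(c): the operator $(A+P)^B=A$ generates an analytic semigroup of angle $\alpha$ on $X$ if and only if $A_0$ is sectorial of angle $\alpha$ and $C=\sC$ generates an analytic semigroup of angle $\alpha$ on $Y^2$. Since I will have shown that $A_0$ is sectorial of angle $\tfrac{\pi}{2}$, and $\tfrac{\pi}{2}\ge\alpha$ for every $\alpha\in(0,\tfrac{\pi}{2}]$, the sectoriality of $A_0$ is automatic and the condition in (c) collapses to the generator property of $\sC$ alone, which yields the claimed equivalence. The main obstacle is the sectoriality of angle $\tfrac{\pi}{2}$ of the Banach space-valued Dirichlet second derivative $A_0$: unlike the scalar case this is not directly quotable, but its resolvent is given by integration against the explicit scalar Green's function of $-d^2/ds^2$ on $[0,1]$, whose norm bounds do not depend on $Y$, so the classical scalar sectoriality estimates transfer verbatim to the $Y$-valued setting.
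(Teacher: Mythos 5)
Your proposal is correct and takes essentially the same route as the paper: both cast $A$ as the operator $(A+P)^B$ of \eqref{eq:A+P^B} with $A_m=f''$ on $\rC^2([0,1],Y)$, $B_0=\Phi$, $C=\sC$, verify \autoref{masu} (affine Dirichlet operator $L_0$, sectoriality of angle $\tfrac{\pi}{2}$ of the $Y$-valued Dirichlet second derivative, boundedness of $\Phi$), and conclude by \autoref{Stoerung2}. Your explicit interpolation estimate for the $A_0$-bound of $P$ and the observation that boundedness of $N^{B_0}=\Phi L_0$ yields relative $C$-boundedness of bound $0$ merely spell out details the paper leaves implicit.
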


\begin{proof}
We consider $\dX := Y^2$ and define $L\in\sL(X,\dX)$ by
$Lf := \binom{f(0)}{f(1)}$. Moreover, define
\begin{equation*}
A_m:D(A_m)\subseteq X\to X,\quad
A_m f := f'' + Pf , \quad D(A_m) =\rC^2([0,1],Y) 
\end{equation*}
and 
\begin{equation*}
B:D(B)\subseteq X\to\dX,\quad
B f :=\Phi f+\sC Lf, \quad D(B):=\bigl\{f\in X:\tbinom{f(0)}{f(1)}\in D(\sC)\bigr\}.
\end{equation*}
Then $A$ coincides with the operator given by \eqref{eq:W-BC}. Since $P$
is a relatively $A_m$-bounded of $A_m$-bound $0$ and $\Phi \in \mathcal{L}(X,\dX)$,
by \autoref{Stoerung2} it suffices to verify the \autoref{masu} and that $A_0$ is sectorial of angle $\alpha > 0$.

\setcounter{stp}{0}
\begin{stepii}\label{Step 1. b}
The abstract Dirichlet operator $L_0 \in \mathcal{L}(\partial X, X)$ exists. 
\end{stepii}
\begin{proof}
As in \autoref{Step_1_a} of the proof of \autoref{exa1} we have
$\ker(A_m) = \{ \epsilon_0 y_0 + \epsilon_1 y_1 \colon y_0,y_1 \in Y \} $ for
\begin{equation*}
\epsilon_0(s) := 1-s \quad\text{and}\quad \epsilon_1(s) := s,\ s\in [0,1].
\end{equation*}
Moreover, $L_0 := (L|_{\ker(A_m)})^{-1}\in\sL(\dX,X)$ is given by
\begin{equation*}
L_0 
\tbinom{y_0}{y_1} = 
\epsilon_0\cdot
y_0 + \epsilon_1\cdot y_1.
\qedhere
\end{equation*}
\end{proof}

\begin{stepii}\label{Step 2. b}
The operator $A_0$ on $X$ is sectorial of angle $\frac{\pi}{2}$.
\end{stepii}
\begin{proof}
This follows as in the proof of \cite[Thm VI. 4.1]{EN:00}.
\end{proof}
 
\begin{stepii} 
The maximal operator $A_m$ is densely defined and closed. 
\end{stepii}
\begin{proof}
Since $\rC^2([0,1],Y) \subset D(A_m)$, $A_m$ is densely defined. By \autoref{Step 1. b}, \autoref{Step 2. b}
and \cite[Lem. 3.2]{EF:05} it follows that $A_m$ is closed. 
\end{proof}

\begin{stepii}
The feedback operator $B$ is relatively $A_0$-bounded of bound $0$.
\end{stepii}
\begin{proof}
For $f \in D(A_0) \subset X_0$ we have $Bf = \Phi f$. Since $\Phi$ is bounded, this implies the claim.
\end{proof}

Summing up, by \autoref{Maintheorem} the claim follows completing the proof.
\end{proof}

\subsection{Perturbations of the Laplacian on $\mathbf{\COq}$ with generalized Wentzell boundary conditions}\label{LgW}

In this subsection we complement the example from the introduction concerning the Laplacian on $\COq$ with generalized Wentzell boundary conditions, see also \cite{Eng:03}.

To this end we consider a bounded domain $\Omega \subset \R^n$ with $\rC^{\infty}$-boundary $\partial \Omega$ and take an operator $P\in\sL(\rC^1(\overline{\Omega}),\COq)$ (e.g. a first-order differential operator). Then we define the perturbed Laplacian 
$A:D(A)\subset\COq\to\COq$ with generalized Wentzell boundary conditions  by $Af:=\Delta_mf+Pf$ for
\begin{equation}\label{eq:bc-W-Lap-LB} 
f\in D(A)
\quad:\iff\quad
(\Deltam f+Pf)\big|_{\partial\Omega}=\beta\cdot\tddn\; f+\gamma\cdot f\big|_{\dO}
+q\cdot\DLB f|_{\dO},
\end{equation}
cf. also \cite[(1.2), (3.3)]{FGGR:10}. Here $\beta<0$, $\gamma\in\CdO$, $q\ge0$ and $\DLB:D(\DLB)\subset\CdO\to\CdO$ denotes the Laplace--Beltrami operator. In case $P=0$, $q=0$ this just gives the operator $A$ from the introduction. As we will see below for $q>0$ the Laplace--Beltrami operator will dominate the dynamic on the boundary $\dX$.\footnote{The discussion of this case has been inspired by a discussion with J. Goldstein.} However, in this case essentially the same generation result holds as for $q=0$.

\begin{cor}\label{cor:D_LB}
For all $q>0$ the operator $A\subseteq\Delta_m+P$ with domain given in \eqref{eq:bc-W-Lap-LB} generates a compact and analytic semigroup of angle $\frac{\pi}2$.
\end{cor}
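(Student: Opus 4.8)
The plan is to verify \autoref{masu} together with the hypotheses of \autoref{Stoerung2}, since the Laplace--Beltrami operator $\DLB$ enters through the \emph{boundary} feedback and the natural decomposition is $B = B_0 + CL$ with $B_0 = \beta\tddn$ (the feedback from the interior) and $C = M_\gamma + q\,\DLB$ (the free evolution on $\dO$). First I would set up the abstract framework with $X := \COq$, $\dX := \CdO$, $A_m := \Deltam$, and trace operator $Lf := f|_{\dO}$, so that $A_0 = \Delta_0$ is the Dirichlet Laplacian. The perturbation $P\in\sL(\rC^1(\overline\Omega),\COq)$ is relatively $A_m$-bounded with $A_0$-bound $0$ (a first-order term controlled by the second-order Dirichlet Laplacian), which lets me invoke \autoref{Stoerung2} and reduce to the case $P=0$.

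The key reduction is to show that $N^{B_0}$ is relatively $C$-bounded of bound $0$, where $N^{B_0} = \beta\cdot\tddn\cdot L_0$ is the classical Dirichlet-to-Neumann operator and $C = M_\gamma + q\,\DLB$ with $q>0$. The point is that, since $q>0$ dominates, the second-order operator $C$ controls the first-order Dirichlet-to-Neumann operator $N^{B_0}$. Indeed, $\DLB$ is (minus) a sectorial second-order elliptic operator on the compact manifold $\dO$, while $N^{B_0}$ is a first-order pseudodifferential operator on $\dO$; by standard elliptic estimates the graph norm of $N^{B_0}$ is dominated by that of $\DLB$ with bound $0$, and the bounded term $M_\gamma$ does not affect this. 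Having established this, \autoref{Stoerung2} reduces the generation property of $A$ to the two conditions in its assertion~(c): that $A_0 = \Delta_0$ is sectorial of angle $\tfrac\pi2$ on $\COq$, which is classical, and that $C = M_\gamma + q\,\DLB$ generates an analytic semigroup of angle $\tfrac\pi2$ on $\CdO$.

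For the latter I would argue that $q\,\DLB$, being $q>0$ times the Laplace--Beltrami operator on the smooth compact boundary manifold $\dO$, generates an analytic semigroup of angle $\tfrac\pi2$ on $\CdO$ with compact resolvent; the bounded multiplication perturbation $M_\gamma$ preserves both properties by \cite[Thm.~III.2.10]{EN:00}. Compactness of the whole semigroup then follows from \autoref{kompakt}: $\Delta_0$ has compact resolvent on $\COq$ (by elliptic regularity and the compact embedding $\rC^1\inc\rC$ on the bounded domain) and $N = C$ has compact resolvent since $\DLB$ does, so $A$ has compact resolvent, whence the semigroup is compact by \cite[Thm.~II.4.29]{EN:00}.

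The main obstacle I anticipate is the relative boundedness claim $N^{B_0}$ relatively $C$-bounded with bound $0$, since it requires comparing a first-order boundary operator with a second-order one in the sup-norm setting of $\CdO$ rather than in an $\rL^2$ framework. Establishing the domain equality $D(N^{B_0}_\lambda)\supseteq D(\DLB)$ and the accompanying estimate $\|N^{B_0} x\|_\infty \le \eps\,\|\DLB x\|_\infty + M_\eps\,\|x\|_\infty$ will rest on elliptic regularity for the Dirichlet problem and mapping properties of the Dirichlet-to-Neumann map between the relevant Hölder or continuous-function scales on $\dO$; handling the non-Hilbertian topology carefully is the delicate point, whereas the analyticity and compactness of $\DLB$ itself are standard.
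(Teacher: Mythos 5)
Your overall strategy coincides with the paper's: the decomposition $B=B_0+CL$ with $B_0=\beta\tddn$ and $C=q\DLB+M_\gamma$, the reduction via \autoref{Stoerung2} to the sectoriality of $A_0=\Delta_0$ and the generation property of $C$ on $\CdO$, and compactness via \autoref{kompakt}. However, the one step that carries all the weight --- that $N^{B_0}$ is relatively $C$-bounded of bound $0$ --- is exactly the step you do not prove. Your appeal to ``standard elliptic estimates'' comparing a first-order pseudodifferential operator with a second-order one is a plausibility argument, not a proof: in the sup-norm setting of $\CdO$ the domain $D(\DLB)$ is \emph{not} contained in $\rC^2(\dO)$, the relevant interpolation inequalities are not available off the shelf, and you yourself flag this as the unresolved obstacle. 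As written, the proposal therefore has a genuine gap at its central point.

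The paper closes this gap by a structural argument rather than by direct estimates. By the proof of \cite[Thm.~2.1]{Eng:03} the Dirichlet-to-Neumann operator admits the representation $N^{B_0}=-W+Q$ with $W=(-\DLB)^{1/2}$ and $Q$ relatively $W$-bounded; hence $N^{B_0}$ is relatively $(-\DLB)^{1/2}$-bounded. Since $q\DLB$ generates an analytic semigroup, \cite[Thm.~6.10]{Paz:83} gives that the fractional power $(-\DLB)^{1/2}$ is relatively $\DLB$-bounded of bound $0$, and the bounded term $M_\gamma$ is harmless, so $N^{B_0}$ is relatively $C$-bounded of bound $0$. This is a purely functional-analytic route that avoids any H\"older- or Schauder-type machinery on $\dO$; if you want to keep your elliptic-regularity route instead, you would need to actually establish the interpolation inequality $\|N^{B_0}x\|_\infty\le\eps\|\DLB x\|_\infty+M_\eps\|x\|_\infty$ on $D(\DLB)\subset\CdO$, which is considerably more work. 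The remaining ingredients of your argument (sectoriality of $\Delta_0$ of angle $\tfrac\pi2$, generation and compactness for $q\DLB+M_\gamma$, and the use of \autoref{kompakt}) match the paper.
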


\begin{proof} Without loss of generality we assume that $\beta=1$.
To fit the operator $A$ into our setting we define $X:=\COq$, $\dX:=\CdO$ and the trace  $L\in\sL(X,\dX)$, $Lf:=f|_{\dO}$. Then we consider $A_m:=\Delta_m:D(\Delta_m)\subset X\to X$ and $B_0:=\frac{\partial}{\partial n}:D(\frac{\partial}{\partial n})\subset X\to\dX$ as in \cite{Eng:03} and put $C:=q\cdot\DLB+M_\gamma:D(\DLB)\subset\dX\to\dX$ and $B:=B_0+CL$ as in \eqref{split-B}.

Then by \cite[Thm.~6.1.3]{ABHN:01}, $A_0=\Delta_0$ is sectorial of angle $\frac\pi2$ and by \cite[(1.9)]{Eng:03} and \cite[Prop.~II.4.25]{EN:00} has compact resolvent. Moreover, $C$ generates a compact analytic semigroup of angle $\frac\pi2$. Let $W:=(-\DLB)^{\frac12}$. Then by the proof of \cite[Thm.~2.1]{Eng:03} there exists a relatively $W$-bounded perturbation $Q:D(Q)\subset\dX\to\dX$ such that $N^{B_0}=B_0L^{A_m}_0=-W+Q$. This implies that $N^{B_0}$ is relatively $W$-bounded and by \cite[Thm.~6.10]{Paz:83} it follows that $N^{B_0}$ is relatively $C$-bounded of bound $0$. Hence, by \autoref{Stoerung2}, $(A+P)^B$ generates an analytic semigroup of angle $\frac\pi2$. Compactness of this semigroup follows by \autoref{kompakt}.
\end{proof}

We remark that \autoref{cor:D_LB} confirms the conjecture $\theta_\infty=\frac{\pi}{2}$ in \cite[Sect.~5]{FGGR:10} for $a(x)\equiv\Id$ and constant $\beta<0$.

\subsection{Uniformly Elliptic Operators on $\mathbf{\rC(\overline{\Omega})}$}

We consider a uniformly elliptic second-order differential operator
with generalized Wentzell boundary conditions on $\rC(\overline{\Omega})$ for a 
bounded domain $\Omega \subset \R^n$ with $\rC^{\infty}$-boundary $\partial \Omega$.
To this end, we first take real-valued functions
\begin{equation*}
a_{jk} = a_{kj} \in\rC^\infty(\overline{\Omega})
, \quad a_j, a_0, b_0 \in \rC(\overline{\Omega}), \quad 1 \leq j,k \leq n
\end{equation*}
satisfying the uniform ellipticity condition
\begin{equation*}
\sum_{j,k = 1}^n a_{jk}(x)\cdot \xi_j \xi_k \geq c\cdot \| \xi \|^2 \quad \text{ for all } x \in \overline{\Omega}, \ 
\xi = (\xi_1, \dots, \xi_n) \in \R^n
\end{equation*}
and some fixed $c > 0$. Then we define the maximal operator $A_m:D(A_m)\subseteq\COq\to\COq$ in divergence form by
\begin{align*}
A_m f &:= \sum_{j = 1}^n \partial_j \Bigl( \sum_{k = 1}^n a_{jk} \partial_k f \Bigr) 
+ \sum_{k = 1}^n a_k \partial_k f + a_0 f, \\
D(A_m) &:= \biggl\{ f \in \bigcap_{p \geq 1} W^{2,p}_{\text{loc}}(\Omega) \colon A_m f \in \rC(\overline{\Omega}) \biggr\},
\end{align*}
and the feedback operator $B:D(B)\subseteq\COq\to\CdO$ by
\begin{equation*}
B:= - \sum_{j,k = 1}^n a_{jk}\nu_j L \partial_k + b_0 L ,\quad
D(B) := \biggl\{ f \in \bigcap_{p \geq 1} W^{2,p}_{\text{loc}}(\Omega) \colon Bf \in \rC(\partial \Omega) \biggr\},
\end{equation*}
where $L\in\sL(\COq,\CdO)$, $Lf := f|_{\partial \Omega}$ denotes the trace operator. 

\begin{cor}
The operator $A:D(A)\subseteq\COq\to\COq$ given by
\begin{equation*}
A \subseteq A_m,\quad
D(A) := \bigl\{ f \in D(A_m) \cap D(B) \colon LA_m f = Bf \bigr\} 
\end{equation*}
generates a compact and analytic semigroup on $\rC(\overline{\Omega})$. 
\end{cor}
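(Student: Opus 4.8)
The plan is to place $A$ into the abstract setting of \autoref{set:AS} and then reduce, via \autoref{Stoerung}, the generation problem to a Dirichlet realization in $\COq$ together with a Dirichlet-to-Neumann operator on $\dO$. First I would set $X:=\COq$, $\dX:=\CdO$ and $Lf:=f|_\dO$. Following the pattern of the proof of \autoref{cor:D_LB}, I treat the principal divergence-form part $\sum_{j=1}^n\partial_j\bigl(\sum_{k=1}^n a_{jk}\partial_k\,\cdot\bigr)$ as the maximal operator $A_m$ of the abstract framework and collect the lower-order terms in $Pf:=\sum_{k=1}^n a_k\partial_k f+a_0 f$; the latter is relatively $A_m$-bounded with $A_0$-bound $0$ by the standard interpolation of first- and zeroth-order terms against a second-order elliptic operator. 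On the boundary I split $B=B_0+CL$ with the conormal derivative $B_0:=-\sum_{j,k=1}^n a_{jk}\nu_j L\partial_k$ and the bounded multiplication $C:=M_{b_0|_\dO}\in\sL(\dX)$; being bounded, $C$ is trivially relatively $N^{B_0}$-bounded of bound $0$. With these identifications the full maximal operator of the corollary equals $A_m+P$, and $A$ is exactly the operator $(A+P)^B$ of \eqref{eq:A+P^B}.

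Next I would verify \autoref{masu} for the principal part. As this operator carries no zeroth-order term, the Dirichlet problem $A_m f=0$, $f|_\dO=x$ is uniquely solvable in $\COq$ by elliptic regularity and the maximum principle, so assumption~(iii) holds and $L_0\in\sL(\dX,X)$ exists (equivalently, via \cite[Lem.~1.2]{Gre:87}). Assumption~(i) amounts to the sectoriality of angle $\tfrac\pi2$ of the Dirichlet realization $A_0$ on $\COq$, the elliptic counterpart of \cite[Thm.~6.1.3]{ABHN:01}. For assumption~(ii), on $D(A_0)\subset\CnO$ the conormal derivative $B_0$ is a first-order boundary operator dominated by the full second-order operator with bound $0$, which follows from boundary Schauder estimates as in \cite{Eng:03}.

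The decisive ingredient, and the main obstacle, is the generation property of the conormal Dirichlet-to-Neumann operator $N^{B_0}=B_0L_0$ on $\CdO$. By elliptic theory this is a first-order elliptic pseudodifferential operator on the compact manifold $\dO$, and the sign in $B_0$ together with uniform ellipticity makes its principal symbol real and strictly negative. Thus, up to a lower-order remainder, $N^{B_0}$ is minus a positive first-order elliptic operator---the representation $N^{B_0}=-W+Q$ employed for the Laplacian in the proof of \cite[Thm.~2.1]{Eng:03}---and consequently generates an analytic semigroup of angle $\tfrac\pi2$ on $\CdO$; see also \cite{Esc:94} and \cite[Sect.~12.C]{Tay:96b}. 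This is the hard part precisely because on the space of continuous functions one cannot argue by self-adjointness as in the $L^2$-theory but must control the pseudodifferential operator and its generation property on $\CdO$ directly.

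With $A_0$ sectorial of angle $\tfrac\pi2$ and $N^{B_0}$ generating an analytic semigroup of the same angle, \autoref{Stoerung} yields that $A=(A+P)^B$ generates an analytic semigroup of angle $\tfrac\pi2$ on $\COq$. For compactness I would apply \autoref{kompakt} to the full operator---after a shift $A_m\mapsto A_m-\lambda$ with $\lambda$ large enough to meet \autoref{masu}, which does not affect compactness of the resolvent---thereby reducing the compactness of $R(\lambda,A)$ to that of the resolvents of the Dirichlet realization $A_0$ and of $N$. The former is compact by the compact embedding furnished by interior $W^{2,p}_{\mathrm{loc}}$-regularity, and the latter because $N$ is a first-order elliptic operator on the compact boundary $\dO$. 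Hence $A$ has compact resolvent, and by \cite[Thm.~II.4.29]{EN:00} the analytic semigroup generated by $A$ is compact.
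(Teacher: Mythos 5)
Your decomposition is exactly the paper's: principal divergence-form part as $A_m$, lower-order terms as $P$, and $B=B_0+CL$ with the conormal derivative $B_0$ and the bounded multiplication $C=M_{b_0}$, concluded via \autoref{Stoerung}. The difference lies in how the unperturbed problem is settled. The paper simply invokes \cite[Cor.~4.5]{EF:05} for the statement that $\tilde A$ (principal part with conormal Wentzell boundary conditions) generates a compact analytic semigroup, i.e.\ it enters \autoref{Stoerung} through assertion~(b); you instead enter through assertion~(c) and try to verify \autoref{masu} together with the sectoriality of $A_0$ and the generation property of $N^{B_0}$ on $\CdO$ from scratch. Your instinct about where the real difficulty sits is right, but that step is also where your argument is thinnest: the representation $N^{B_0}=-W+Q$ with $W=(-\DLB)^{1/2}$ is established in \cite{Eng:03} only for the Laplacian, so for general $(a_{jk})$ your chain of references really rests on the pseudodifferential analysis in \cite{Esc:94} (which is precisely what \cite[Cor.~4.5]{EF:05} rests on as well); asserting that a first-order elliptic operator with negative real principal symbol generates an analytic semigroup on the non-reflexive space $\CdO$ is the hard content, not a consequence one can wave at. Two smaller points: you claim angle $\tfrac\pi2$ throughout, whereas the corollary deliberately asserts no angle and the subsequent remark only says the angle depends on $(a_{jk})$ --- so either justify $\tfrac\pi2$ for general real symmetric principal part or drop the claim; and for compactness of $R(\lambda,A_0)$ on $\COq$, interior $W^{2,p}_{\mathrm{loc}}$-regularity is not enough --- you need regularity up to the boundary (e.g.\ the resolvent mapping into a H\"older space on $\overline\Omega$, as in \cite[(1.9)]{Eng:03} combined with \cite[Prop.~II.4.25]{EN:00}). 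With those repairs your route is a legitimate, more self-contained alternative to the paper's citation of \cite[Cor.~4.5]{EF:05}.
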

\begin{proof}
Let $X:=\COq$, $\dX:=\CdO$ and
define the maximal operator $\tilde A_m:D(\tilde A_m)\subseteq X\to X$ by
\begin{equation*}
\tilde{A}_m := \sum_{j = 1}^n \partial_j \Bigl( \sum_{k = 1}^n a_{jk} \partial_k \Bigr), \quad
D(\tilde{A}_m) := 
D(A_m),
\end{equation*}
and the feedback operator $\tilde B:D(\tilde B)\subseteq\COq\to\CdO$ by
\begin{equation*}
\tilde{B} := - \sum_{j,k = 1}^n a_{jk}\nu_j L \partial_k , \qquad
D(\tilde{B}) := \biggl\{ f \in \bigcap_{p \geq 1} W^{2,p}_{\text{loc}}(\Omega) \colon \tilde{B}f \in \rC(\partial \Omega) \biggr\}.
\end{equation*}
Then by \cite[Cor. 4.5]{EF:05} it follows
that the operator $\tilde A:D(\tilde A)\subseteq X\to X$ with generalized Wentzell boundary conditions given by
\begin{equation*}
\tilde{A}\subseteq\tilde{A}_m , \quad
D(\tilde{A}) := \bigl\{ f \in D(\tilde{A}_m) \cap D(\tilde{B}) \colon L\tilde{A}_m f = \tilde{B}f \bigr\} 
\end{equation*}
generates a compact and analytic semigroup on $X$.
Let $P f := \sum_{j = 1}^n a_j \partial_j f + a_0 f$ and $C f := b_0 f$. 
Then $P$ is relatively $A_m$-bounded with bound $0$ and $C \in \mathcal{L}(\dX)$,
hence the claim follows from \autoref{Stoerung}. 
\end{proof}

\begin{rem}
This result generalizes \cite[Cor. 4.5]{EF:05} and via \autoref{Maintheorem} also the main theorem in \cite{Esc:94}.
Moreover, it shows that the angle of the analytic semigroup generated by $A$ only depends on the matrix $(a_{jk})_{n\times n}$. 
\end{rem}

\section{Conclusion}
Our abstract approach allows to decompose an operator $A$ with generalized Wentzell boundary conditions into an operator $A_0$ with (much simpler) abstract Dirichlet boundary conditions and the associated abstract Dirichlet-to-Neumann operator $N$. In particular  we prove, under a weak resolvent condition on $A_0$, that
\[
\begin{aligned}
A&\text{ generates an analytic semigroup}\\
 &\text{ of angle $\alpha>0$}
\end{aligned}
\Biggr\}
\quad\iff\quad
\left\{
\begin{aligned}
A_0&\text{ is sectorial of angle $\alpha>0$, and}\\
N&\text{ generates an analytic semigroup}\\
  &\text{ of angle $\alpha>0$,}
 \end{aligned}
\right.
\]
cf. \autoref{Maintheorem}.
This equivalence is new and shows the sharpness of our approach. Moreover, while being very general, our theory applied to concrete examples (where typically $A_0$ is well-understood and sectorial of angle $\frac\pi2$) gives new or improves known generation results, see \autoref{Expl}.


\newcommand{\etalchar}[1]{$^{#1}$}

\bigskip
\emph{Tim Binz}, University of Tübingen, Department of Mathematics, Auf der Morgenstelle 10, D-72076 Tübingen, Germany,
\texttt{tibi@fa.uni-tuebingen.de}

\smallskip
\emph{Klaus-Jochen Engel}, University of L'Aquila, Department of Information Engineering, Computer Science and Mathematics, Via Vetoio,  I-67100 L'Aquila -- Coppito (AQ), Italy,
\texttt{klaus.engel@univaq.it}

\end{document}